\newtheorem{theorem}{Theorem}
\newtheorem*{thm}{Theorem}
\newtheorem{prop}{Proposition}
\newtheorem{conjecture}{Conjecture}
\newtheorem*{conj}{Conjecture}
\theoremstyle{remark}
\newtheorem{remark}{Remark}
\newcommand{\C}{{\mathbb C}}
\newcommand{\F}{{\mathbb F}}
\newcommand{\PP}{{\mathbb P}}
\newcommand{\Q}{{\mathbb Q}}
\newcommand{\Z}{{\mathbb Z}}
\newcommand{\calD}{{\mathcal D}}
\author{Solomon Vishkautsan \\
with an appendix by Michael Stoll}
\date{}
\title{Quadratic rational functions with a rational periodic critical point of period \(3\)}
\begin{document}

\maketitle
\begin{abstract}
We provide a complete classification of possible graphs of rational preperiodic points of quadratic rational functions defined over the rationals with a rational periodic critical point of period 3, under two assumptions: that these functions have no periodic points of period at least 5 and the conjectured enumeration of points on a certain genus 6 affine plane curve. We show that there are exactly six such possible graphs, and that rational functions satisfying the conditions above have at most eleven rational preperiodic points.
\end{abstract}

\section{Introduction}
\label{sec:orgheadline1}

In this article we continue the classification of preperiodicity graphs of quadratic rational functions defined over \(\mathbb{Q}\) with a \(\mathbb{Q}\)-rational periodic critical point that was begun in \cite{MR3656200}. Our aim in this article is to provide a complete classification of rational quadratic functions defined over \(\mathbb{Q}\) with a \(\mathbb{Q}\)-rational periodic critical point of period \(3\).

Let \(\phi:\mathbb{P}^1\to\mathbb{P}^1\) be a rational function, defined over some base field \(K\). A point \(P\in\mathbb{P}^1\) is called \emph{periodic} for \(\phi\) if there exists a positive integer \(n\) such that \(\phi^n(P)=P\). The minimal such \(n\) is called the \emph{period} of \(P\). A point \(P\in\mathbb{P}^1\) is called \emph{preperiodic} for \(\phi\) if there exists a nonnegative integer \(m\) such that \(\phi^m(P)\) is periodic for \(\phi\).

One of the main motivations for this article is the following conjecture of Morton and Silverman \cite{MR1264933}.

\begin{conj}[Uniform Boundedness Conjecture] Let $\phi:\mathbb{P}^N\to\mathbb{P}^N$ be a morphism of degree $d\geq{2}$ defined over a number field $K$. Then 
the number of $K$-rational preperiodic points of $\phi$ is bounded by a bound depending only on $N$ and the degrees of $K/\mathbb{Q}$ and $\phi$.
\end{conj}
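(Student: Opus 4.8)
This is the central open problem of arithmetic dynamics, so I make no claim of a proof; what follows is a description of the strategy one would attempt and of where the essential difficulty lies. The natural first move is to separate the two invariants of a $K$-rational preperiodic point $P$ of $\phi:\PP^N\to\PP^N$: the exact period $n$ of the periodic point $\phi^m(P)$ that $P$ eventually lands on, and the tail length $m$. A uniform bound on the total number of such points would follow from three ingredients: (i) a bound on $n$, uniform over all $\phi$ of degree $d$ defined over $K$; (ii) a bound on $m$, again uniform in $\phi$; and (iii) the observation that, for fixed $n$ and $m$, the preperiodic points with those parameters are the $K$-points of the generalized $(n,m)$-dynatomic scheme, whose fibres over the parameter space have bounded cardinality once (i) and (ii) are in force, so that a counting argument summing over the finitely many admissible pairs $(n,m)$ closes the estimate.

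For (i) one studies the dynatomic modular curves (and their higher-dimensional analogues when $N\geq 2$) parametrizing pairs $(\phi,P)$ with $P$ of exact period $n$, and one wants to show that these have no $K$-rational points once $n$ is large, uniformly in the family. For $N=1$, $d=2$ this is the programme of Poonen, Morton, Flynn--Poonen--Schaefer and others, and it is the precise dynamical analogue of Mazur's theorem bounding torsion on elliptic curves: the relevant curves have genus growing with $n$, so Faltings' theorem yields finiteness for each fixed $n$ but not the uniformity one needs. For (ii), bounding the tail length, one can use local ($p$-adic) arguments — a long tail forces $\phi$ to have bad reduction of a constrained type at many primes, which is controlled via the product formula and discriminant estimates; this step is the most tractable of the three. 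Step (iii) is then essentially bookkeeping, combining the geometry of the dynatomic/preperiodic loci with the bounds from (i) and (ii).

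The hard part is precisely the uniformity in (i), and, to a lesser extent, making the $p$-adic bounds in (ii) independent of $\phi$. Both known routes have serious costs. One is conditional: assuming the $abc$ conjecture, or more generally Vojta's conjecture, Looper-type arguments bound periods and tail lengths uniformly for polynomial maps, and one would try to push these through for rational maps of degree $d$ by controlling ramification and the relevant height inequalities. The other is to settle for unconditional results only in restricted settings — over function fields, where the analogue is a theorem, or for special low-dimensional families in which the associated curves can be treated one at a time (as is done for the period-$3$ critical orbit in the body of this paper). Without a genuinely new idea that gives uniform control of rational points along the entire tower of dynatomic curves, an unconditional proof of the full conjecture, even in the case $N=1$, $d=2$, remains out of reach — which is exactly why the classification here is stated under an explicit hypothesis excluding periodic points of period at least $5$.
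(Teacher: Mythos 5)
This statement is the Morton--Silverman Uniform Boundedness Conjecture, which the paper does not prove: it is quoted purely as motivation, and it remains a wide-open problem. You correctly recognize this and explicitly decline to claim a proof, so there is no proof in the paper to compare your argument against, and no gap to flag in the usual sense. Your strategy sketch --- splitting into uniform bounds on period and tail length plus a counting step over dynatomic/preperiodic loci, with the difficulty concentrated in uniformity along the tower of dynatomic curves (the dynamical analogue of Mazur/Merel), and the known partial routes being either conditional (abc/Vojta-type height arguments) or restricted to special families such as the one treated in this paper --- is an accurate and reasonable description of the state of the art, and it is consistent with how the paper itself positions the conjecture: as the reason one works under explicit hypotheses (Conjectures 1 and 2 of the paper) rather than unconditionally.
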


This powerful conjecture implies in particular Merel's theorem of uniform boundedness of torsion subgroups of the Mordell--Weil groups of rational points on elliptic curves (see \cite{MR2316407}, Remark 3.19), as well as the conjectured uniform boundedness of torsion subgroups of abelian varieties (see \cite{MR1995861}).

By Northcott's theorem \cite{MR0034607}, the set \(\text{PrePer}(\phi, K)\) of \(K\)-rational preperiodic points of a morphism \(\phi:\mathbb{P}^N\to\mathbb{P}^N\) defined over a number field \(K\), is finite. It can therefore be given a finite directed graph structure, called the \emph{preperiodicity graph} of \(\phi\), by drawing an arrow from \(P\) to \(\phi(P)\) for each \(P\in \text{PrePer}(\phi, K)\). The conjecture of Morton and Silverman implies in particular that the number of possible preperiodicity graphs for such \(\phi\) is finite and depends only on the constants \(\deg(\phi), [K:\mathbb{Q}]\) and \(N\). It is therefore natural to ask for the list of all possible preperiodicity graphs for a given family of endomorphisms of \(\mathbb{P}^N\).

The main example of a classification of preperiodicity graphs for rational functions is the classification of preperiodicity graphs of quadratic polynomials over \(\mathbb{Q}\) by Poonen \cite{MR1617987}. It relies on the following conjecture.

\begin{conj}[Flynn, Poonen and Schaefer \cite{MR1480542}] Let $\phi$ be a quadratic polynomial defined over $\mathbb{Q}$. Then $\phi$ has no $\mathbb{Q}$-rational periodic points of period greater than \(3\).
\end{conj}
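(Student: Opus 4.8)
The plan is to put $\phi$ in a normal form and then convert the statement into one about rational points on a family of algebraic curves. Every quadratic polynomial over $\mathbb{Q}$ is conjugate, by an affine change of coordinates defined over $\mathbb{Q}$, to a map $\phi_c(z) = z^2 + c$ with $c \in \mathbb{Q}$, and conjugacy preserves periods; so it suffices to show that for every $c \in \mathbb{Q}$ and every integer $N \ge 4$ the map $\phi_c$ has no $\mathbb{Q}$-rational point of exact period $N$. Periods $1$, $2$ and $3$ do occur for infinitely many $c \in \mathbb{Q}$, so no further elementary reduction is available and the arithmetic of the relevant moduli space must be used.

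The main construction is the $N$-th dynatomic polynomial $\Phi_N(z,c) \in \mathbb{Z}[z,c]$, the appropriate factor of $\phi_c^{N}(z) - z$ that isolates the points of exact period $N$: for all but finitely many $c$ its roots in $z$ are precisely those points. The affine curve $\{\Phi_N = 0\}$ carries a $\mathbb{Z}/N\mathbb{Z}$-action $P \mapsto \phi_c(P)$; quotienting by it gives the dynamical modular curves $Y_0(N)$ and $Y_1(N)$, with smooth projective models $X_0(N)$, $X_1(N)$, parametrizing quadratic polynomials together with a marked orbit of period $N$. A $\mathbb{Q}$-rational $N$-cycle of some $\phi_c$ produces a $\mathbb{Q}$-point of $X_1(N)$ lying off the cuspidal locus, where the cycle degenerates (two coordinates collide and the period drops, or the orbit meets $\infty$). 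So the statement becomes: for every $N \ge 4$, all $\mathbb{Q}$-points of $X_1(N)$ are cusps.

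I would then argue curve by curve. For $N = 4$, one checks that $X_1(4)$ has genus $2$, computes that the Mordell--Weil rank of its Jacobian is $1 < g$, and enumerates $X_1(4)(\mathbb{Q})$ by the Chabauty--Coleman method, finding only cusps. For $N = 5$, one reduces to a genus-$2$ curve, verifies the rank of its Jacobian is small enough, and combines Chabauty--Coleman with a descent to remove the leftover $p$-adic points, again obtaining only cusps. For $N = 6$, the relevant quotient curve has genus $4$; a two-descent together with Chabauty and a Mordell--Weil sieve settles it, at the price of assuming the Birch--Swinnerton-Dyer conjecture (equivalently, the predicted Mordell--Weil rank) for an abelian-variety factor of its Jacobian.

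The hard part --- and the reason the assertion remains a conjecture --- is $N \ge 7$. There the genus of $X_1(N)$ grows exponentially in $N$, the Jacobian decomposes into high-dimensional factors whose Mordell--Weil ranks are difficult even to bound and may exceed $g - 1$, so classical Chabauty--Coleman no longer applies, and there is no uniform mechanism forcing the rational points onto the cuspidal locus. For the first few inaccessible values of $N$ one might hope to apply quadratic or non-abelian Chabauty together with a Mordell--Weil sieve to show $X_1(N)(\mathbb{Q})$ has no non-cuspidal point; a proof valid for all $N$ simultaneously, however, would seem to require a genuinely new ingredient --- for instance a height or equidistribution bound on the moduli space, or a reduction-theoretic argument constraining the congruence patterns of a long rational orbit --- rather than the present case-by-case analysis.
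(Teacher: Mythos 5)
There is nothing in the paper for your argument to be measured against: the statement you were given is the conjecture of Flynn, Poonen and Schaefer, which the paper states \emph{as a conjecture}, cites to \cite{MR1480542}, and then assumes as a hypothesis (it is the input to Poonen's classification theorem, exactly as Conjectures 1--3 of the present paper are inputs to Theorem 1). The paper offers no proof, and none exists in the literature. Your writeup correctly recognizes this: it is a survey of the standard strategy and of the known partial results rather than a proof, and as such it is essentially accurate --- reduction to the normal form $z^2+c$, dynatomic polynomials, the dynamical modular curves $X_1(N)$, the period-$4$ case settled by Morton via a genus-$2$ curve, period $5$ by Flynn--Poonen--Schaefer via a genus-$2$ quotient, period $6$ by Stoll conditionally on BSD, and the obstruction for $N\ge 7$ coming from the growth of the genus and the failure of rank bounds needed for Chabauty--Coleman. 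One small correction: for $N=4$ the relevant curve is birational to the classical modular curve $X_1(16)$, whose Jacobian has Mordell--Weil rank $0$, so Morton's argument does not need Chabauty--Coleman with rank $1$ as you suggest; the rational points are pinned down by the finite Mordell--Weil group. But the substantive conclusion of your proposal --- that the general statement remains open and that no uniform mechanism is currently available --- is exactly the status the paper assigns to it, so there is no gap to repair, only the observation that what you have written is (correctly) not a proof.
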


Evidence for this conjecture can be found in several articles, including \cite{MR1480542,MR1665198,MR2465796,MR3065461}.

\begin{thm}[Poonen]
Assuming the Flynn, Poonen and Schaefer conjecture, there exist exactly $12$ possible preperiodicity graphs for quadratic polynomials defined over $\mathbb{Q}$, and a quadratic polynomial has at most nine $\mathbb{Q}$-rational preperiodic points.
\end{thm}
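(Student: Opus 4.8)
The plan is to follow Poonen's approach for the polynomial case, in three stages. \emph{Normalization.} Every quadratic polynomial over $\mathbb{Q}$ is conjugate, by a $\mathbb{Q}$-rational affine change of coordinates, to a unique map $f_c(x)=x^2+c$; since conjugation induces an isomorphism of preperiodicity graphs, it suffices to classify the graphs of the $f_c$. For every $c$ the point $\infty$ is a totally ramified rational fixed point, so each preperiodicity graph contains the one-vertex fixed component $\{\infty\}$ as a disjoint summand. The only finite critical point is $0$, with critical value $c$, and $f_c(x)=f_c(-x)$; hence the $f_c$-fibre over a finite point $y$ is $\{0\}$ if $y=c$ and otherwise $\{\pm\beta\}$ for some $\beta$. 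Consequently a finite vertex of a preperiodicity graph has in-degree $0$, $1$, or $2$: in-degree $1$ exactly when the vertex equals $c$ (with preimage $0$), and in-degree $2$ exactly when its two preimages form a $\pm$-pair, both of which are then automatically preperiodic.

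\emph{Combinatorial reduction.} Assuming the Flynn--Poonen--Schaefer conjecture, every rational periodic cycle of $f_c$ has length $1$, $2$, or $3$, so the finite part of a preperiodicity graph is a disjoint union of $\rho$-shaped components: a cycle of length $n\in\{1,2,3\}$ with a forest of binary trees attached along it, the branching constrained by the in-degree dichotomy above. To bound the sizes of these trees I would invoke the known non-existence and finiteness theorems for preperiodic portraits of $x^2+c$: the locus of pairs $(c,x)$ for which $x$ has a prescribed preperiodic type $(m,n)$ (tail of length $m$ landing on a cycle of length $n$) is cut out by an explicit algebraic curve, a quotient of a dynatomic curve, whose genus exceeds $1$ for all but a short list of small $(m,n)$ and which therefore, by Faltings, carries only finitely many rational points. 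Discarding the types that are impossible over $\mathbb{Q}$ and imposing the $\pm$-symmetry collapses everything to an explicit finite list of candidate graphs.

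\emph{Deciding the candidates.} For each candidate graph $G$, requiring $f_c$ to realize the full portrait $G$ defines a curve $X_G$ over the $c$-line; $G$ occurs for infinitely many $c$ precisely when $X_G$ has a rational point on a genus-$0$ component dominating the $c$-line, and for finitely many (possibly zero) $c$ otherwise. For the genus-$0$ components I would write down an explicit rational parametrization, giving an infinite family of $c$ realizing $G$. For the genus-$1$ components I would identify the associated elliptic curve, compute its Mordell--Weil rank and torsion by descent, list the finitely many rational points, and see which portraits arise. For the genus-$\ge 2$ components I would either recognize a proper sub-portrait already ruled out, or run an explicit Chabauty--Coleman or Mordell--Weil sieve computation to determine all rational points. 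Assembling the portraits that do occur, taking those maximal under sub-portrait inclusion, and checking which unions are realizable by a single $c$ (again via the in-degree constraints) yields exactly $12$ graphs; the largest has nine vertices, one of them $\infty$, and an explicit $c$ attains it, giving the bound of nine $\mathbb{Q}$-rational preperiodic points.

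\emph{Main obstacle.} The crux is the genus-$\ge 2$ curves: Faltings gives finiteness but not the points, so the technical heart is proving that each such curve has no rational points beyond the degenerate ones --- cusps, or points where the portrait collapses onto a smaller one. This is precisely the kind of statement that for period $4$ is already a hard theorem of Flynn, Poonen and Schaefer, and that for periods $5$ and higher is furnished only by the conjecture being assumed. A secondary but real difficulty is organizational: correctly enumerating the finite set of candidate graphs from the cycle-length and in-degree constraints, and verifying that the realizable portraits fit together into precisely the $12$ claimed graphs, with none missing and none spurious.
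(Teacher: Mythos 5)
The paper does not prove this theorem---it is quoted as background and attributed to Poonen \cite{MR1617987}---and your outline is essentially Poonen's actual argument: normalize to $x^2+c$, use the Flynn--Poonen--Schaefer conjecture plus the in-degree/$\pm$-symmetry constraints to reduce to a finite list of candidate portraits, and decide each candidate by determining the rational points on the associated (dynatomic quotient) curve according to its genus. This is also the same strategy the paper itself employs for its main theorem, so your proposal is consistent with both the cited proof and the paper's methodology.
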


In the article \cite{MR3656200}, J.K. Canci and this author proposed a generalization of Poonen's classification of preperiodicity graphs of quadratic polynomials to the classification of preperiodicity graphs of quadratic rational functions with a rational periodic critical point. Recall that a point \(P\in\mathbb{P}^1\) is called \emph{critical} if \(\phi'(P) = 0\) (if \(P\) or \(\phi\)(P) are \(\infty\) then we conjugate \(\phi\) by an automorphism of \(\mathbb{P}^1\), in order to move \(P\) and \(\phi(P)\) away from \(\infty\)). This is indeed a generalization of quadratic polynomials since the latter are exactly quadratic rational functions with a rational \emph{fixed} critical point (up to conjugation by an automoprhism of \(\mathbb{P}^1\)). In \cite{MR3656200}, Canci and the author provided a full classification of preperiodicity graphs of quadratic rational functions defined over \(\mathbb{Q}\) with a \(\mathbb{Q}\)-rational periodic critical point of period \(2\), up to the following conjecture.

\begin{conjecture} \label{conj:jksoli}
Let $\phi$ be a quadratic rational function defined over $\mathbb{Q}$ having a $\mathbb{Q}$-rational periodic critical point of period $2$. Then $\phi$ has no $\mathbb{Q}$-rational periodic points of period greater than \(2\).
\end{conjecture}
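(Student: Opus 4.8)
\emph{Strategy toward Conjecture~\ref{conj:jksoli}.} The natural plan is to translate the statement into a question about rational points on a family of dynatomic modular curves and then attack its low-period members with explicit Diophantine geometry. First I would recall from \cite{MR3656200} (and from Manes's study of quadratic rational maps with a periodic critical point) that any quadratic rational function over $\mathbb{Q}$ with a $\mathbb{Q}$-rational periodic critical point of period $2$ is $\mathrm{PGL}_2(\mathbb{Q})$-conjugate to a member $\phi_t$ of an explicit one-parameter family, where $t$ ranges over $\mathbb{Q}$ minus a finite set of degenerate values (at which $\phi_t$ drops degree, or the marked critical point loses exact period $2$). For each $n\ge 3$ the $n$-th dynatomic polynomial $\Phi_n(z;t)\in\mathbb{Q}(t)[z]$, of degree $\nu_2(n)=\sum_{d\mid n}\mu(n/d)(2^d+1)$ in $z$ (thus $6$ for $n=3$, $12$ for $n=4$, $30$ for $n=5$), cuts out an affine plane curve $C_n\subset\mathbb{A}^2_{(t,z)}$; a $\mathbb{Q}$-rational periodic point of exact period $n$ for some $\phi_t$ is precisely a point of $C_n(\mathbb{Q})$ lying outside the degenerate locus — the fibres over the bad $t$, the locus where $z$ belongs to the period-$2$ critical orbit (so the true period divides $2$), and, for composite $n$, the locus where $z$ has period a proper divisor of $n$. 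Hence the conjecture is equivalent to: for every $n\ge 3$, all rational points of $C_n$ are degenerate.

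Second, I would treat $n=3$ and $n=4$ directly: compute $\Phi_3$ and $\Phi_4$, pass to a smooth projective model of each curve, compute its genus, and determine the rational points by the technique appropriate to that genus — a rational parametrisation if the genus is $0$, the Mordell--Weil group if it is $1$, and Chabauty--Coleman together with the Mordell--Weil sieve (after passing to a quotient or an unramified abelian covering of small Mordell--Weil rank) if it is at least $2$ — and then check in each case that every rational point is degenerate. This is the same kind of analysis carried out in the appendix for the genus-$6$ curve of the period-$3$-critical problem, and I would expect a lighter version of it to settle $n=3$ and $n=4$, either unconditionally or conditional only on an explicitly verifiable rank or BSD input.

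The main obstacle is the tail $n\ge 5$. The genus of $C_n$ grows exponentially in $n$ (already $C_5$ has large genus), so these curves quickly leave the reach of Chabauty--Coleman unless one can exhibit a covering of unexpectedly small Mordell--Weil rank, a condition I see no structural reason to expect and no uniform way to arrange. More fundamentally, nothing in this circle of ideas excludes \emph{all} large periods simultaneously: good-reduction bounds in the spirit of Canci (and Canci--Paladino) constrain the periods of $\mathbb{Q}$-rational periodic points of a quadratic rational map in terms of its primes of bad reduction, and a map with a rational period-$2$ critical point has a restricted reduction type, but such results bound the period rather than pin it down to $2$, leaving infinitely many cases open. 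So, as with Poonen's theorem for quadratic polynomials, I expect a complete proof to remain conditional: one settles the small cases by the curve computations above, and the remaining periods are exactly the conjectural content.
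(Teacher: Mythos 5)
The statement you were asked to prove is Conjecture~\ref{conj:jksoli}, and the paper contains no proof of it: it is quoted from \cite{MR3656200} as the hypothesis under which the period-$2$ classification there was carried out, and it remains open both there and here, so there is no proof of the authors' to measure your attempt against. Your proposal is, accordingly, not a proof either, and you say so yourself. What you outline — conjugating to a one-parameter family $\phi_t$, cutting out the dynatomic curves $C_n$ in $\mathbb{A}^2_{(t,z)}$, settling $n=3$ and $n=4$ by genus computations, Mordell--Weil groups, and Chabauty--Coleman with the Mordell--Weil sieve, while discarding the degenerate loci — is exactly the kind of partial evidence the authors themselves supply: in \cite{MR3656200} for the period-$2$ case, and in Section~\ref{support-conj1} of this paper for the period-$3$ analogue (Conjecture~\ref{conj:main1}), where extra $3$-cycles and $4$-cycles are excluded via a genus-$2$ hyperelliptic curve with rank-$0$ Jacobian and a genus-$1$ quotient birational to the elliptic curve 19a3, respectively.

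The genuine gap is the one you name: for $n\ge 5$ the genus of $C_n$ grows rapidly, there is no uniform structural reason for a low-rank quotient or covering to exist, and good-reduction-type bounds only bound the period in terms of bad primes rather than forcing it down to $2$. No case-by-case curve computation of the sort you describe can exhaust infinitely many $n$, which is precisely why the statement is a conjecture rather than a theorem. So your write-up should be read as a correct assessment of the state of the art and a plausible program for strengthening the evidence (mirroring the authors' own computations), not as a proof; presenting it as an attempted proof of the conjecture would overstate what the method can deliver.
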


\begin{thm}
Assuming the conjecture above, there are exactly $13$ possible preperiodicity graphs for quadratic rational functions defined over $\mathbb{Q}$ with a $\mathbb{Q}$-rational periodic critical point of period $2$. Moreover, the number of $\mathbb{Q}$-rational preperiodic points of such maps is at most $9$.
\end{thm}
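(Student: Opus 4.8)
The plan is to follow Poonen's strategy for quadratic polynomials \cite{MR1617987}, adapted to the relevant one-parameter family.

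First I would fix a normal form. The periodic critical point \(P_0\) is \(\Q\)-rational by hypothesis, and so is \(P_1=\phi(P_0)\); conjugating by an automorphism of \(\PP^1\) defined over \(\Q\) we may assume \(P_0=0\) and \(P_1=\infty\), so that \(\phi(0)=\infty\), \(\phi(\infty)=0\) and \(0\) is a critical point. Imposing these conditions and using the remaining scaling freedom \(z\mapsto\lambda z\), one checks that up to conjugation over \(\Q\) the map \(\phi\) lies in the family \(\phi_t(z)=\dfrac{t(z+1)}{z^2}\), \(t\in\Q^*\), together with the single extra map \(z\mapsto 1/z^2\); moreover the \emph{second} critical point is then always \(-2\), hence automatically \(\Q\)-rational, with \(\phi_t(-2)=-t/4\). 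Thus every rational preperiodicity graph in question is, up to isomorphism, the graph of some \(\phi_t\), and both critical orbits are explicit functions of \(t\).

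Next I would list, combinatorially, the graphs that can occur. Being of degree \(2\), \(\phi_t\) gives every vertex out-degree \(1\) and in-degree at most \(2\), with an incoming edge ``ramified'' only when its source is a critical point, of which there are just two; elementary facts about quadratic rational maps add that there are at most \(3\) fixed points and exactly one \(2\)-cycle, which here is forced to be \(\{0,\infty\}\) and is therefore always rational, while \(\infty\) has the single preimage \(0\) (so no tail hangs off \(\infty\)). Combining these with the hypothesis (Conjecture~\ref{conj:jksoli}) that there are no \(\Q\)-rational periodic points of period \(>2\), I would bound, mirroring Poonen's analysis, the number of cycles, the lengths of the preperiodic tails, and the number of tails attached at each periodic point, producing a finite explicit list of candidate graphs, each containing the cycle \(\{0,\infty\}\). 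For each candidate graph \(G\) I would then decide realizability: introduce a coordinate variable for every vertex other than the two points \(0,\infty\) of the critical cycle, together with the parameter \(t\), and write one equation per edge expressing that \(\phi_t\) carries the source to the target, supplemented by distinctness and non-degeneracy conditions. Since each component of \(G\) is a cycle with trees attached, this system is essentially triangular and its \(\Q\)-solutions are the rational points of an explicit affine curve \(C_G\) --- of genus \(0\) for the simplest graphs, of positive genus for the more elaborate ones. Completing the classification amounts to determining \(C_G(\Q)\) for each \(G\): the genus-\(0\) cases by rational parametrization; the genus-\(1\) cases by exhibiting the relevant elliptic curve (or its quadratic twist) and computing its Mordell--Weil group; and the handful of higher-genus cases by Chabauty--Coleman or elliptic Chabauty together with a Mordell--Weil sieve. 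A graph lands on the final list iff some \(t\in\Q\) produces exactly that graph --- all coordinates distinct and no further rational preperiodic point present --- and in particular every candidate with \(\ge 10\) vertices is eliminated here, which yields the bound of \(9\).

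The real difficulty is concentrated in this last step: for the larger candidate graphs \(C_G\) has positive genus and one must \emph{certify} that all rational points have been found, which is exactly where the Chabauty-type arguments and the sieve are needed; a curve beyond the reach of these methods would force an additional hypothesis, as indeed happens in the period-\(3\) analysis of the present article (the genus \(6\) curve of the appendix), but not here. A secondary source of care is bookkeeping: checking that the \(13\) resulting graphs are pairwise non-isomorphic, that each is \emph{maximal} so that it is genuinely the full rational preperiodicity graph of some \(\phi_t\) rather than a proper subgraph, and that the degenerate parameters --- where the \(2\)-cycle collides with a fixed point, or the two critical points merge --- are correctly discarded. Putting the four steps together gives exactly \(13\) graphs and the bound of \(9\), conditional only on Conjecture~\ref{conj:jksoli}.
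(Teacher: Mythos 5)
First, a point of orientation: the statement you were asked to prove is not proved in this paper at all --- it is the main theorem of \cite{MR3656200}, quoted here as motivation --- so there is no internal proof to compare with; the closest analogue is the period-3 classification carried out in Sections 3--4 and the appendix. Your outline follows exactly the architecture of that work (and of \cite{MR3656200}): put the maps into a one-parameter normal form, generate a finite list of candidate graphs (the paper does this via the recursive-generation lemma and a Hasse diagram of graphs rather than a direct Poonen-style count, but the effect is the same), attach to each candidate a dynamical modular curve, and determine its rational points by genus-0 parametrization, rank-0 elliptic curves, or Chabauty--Coleman plus the Mordell--Weil sieve. As a plan this is sound and is essentially the published route; of course all the substance (the explicit candidate list, the specific curves, and the certified point determinations) is deferred in your sketch.

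There is, however, one concrete error in your normal-form step that would corrupt the final count. Imposing \(\phi(0)=\infty\), \(\phi(\infty)=0\) with \(0\) critical gives \(\phi(z)=(bz+c)/z^2\); for \(b\neq 0\) you can indeed rescale over \(\Q\) to \(\phi_t(z)=t(z+1)/z^2\), but for \(b=0\) you get \(\phi(z)=c/z^2\), and a \(\Q\)-rescaling only changes \(c\) by cubes (and inversion), so this is \emph{not} a single extra map \(z\mapsto 1/z^2\) but infinitely many \(\Q\)-conjugacy classes. These are precisely the maps with both critical points in the \(2\)-cycle, i.e.\ the PCF case. They matter: for \(c\) not a cube (e.g.\ \(c=2\)) the full set of \(\Q\)-rational preperiodic points of \(c/z^2\) is just \(\{0,\infty\}\), so the bare \(2\)-cycle is a realizable graph; yet it is realized neither by any \(\phi_t\) (whose graph always contains the extra preimage \(-1\) of \(0\)) nor by \(1/z^2\) (whose graph has a rational fixed point with a tail). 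So your enumeration, as written, would miss at least one of the \(13\) graphs. The fix is the one used both in \cite{MR3656200} and in the present paper's period-3 case: handle the bicritical-cycle maps separately via the Lukas--Manes--Yap classification of PCF quadratic maps \cite{MR3240812}, and run the modular-curve analysis only for the non-PCF family.
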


In order to obtain a similar classification for quadratic rational functions with a \(\mathbb{Q}\)-rational periodic critical point of period \(3\), we need to assume a similar conjecture to the conjecture of Flynn, Poonen and Schaefer and Conjecture \ref{conj:jksoli}. First, let us define a \emph{critical \(n\)-cycle} to be the set of iterates of a periodic critical point of period \(n\). 

\begin{conjecture}
\label{conj:main1}
Let $\phi$ be a quadratic rational function defined over $\mathbb{Q}$ with a $\mathbb{Q}$-rational critical \(3\)-cycle. Then $\phi$ has no $\mathbb{Q}$-rational periodic points of period greater than $2$ lying outside of the critical cycle.
\end{conjecture}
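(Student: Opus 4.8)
\emph{Strategy.} The plan is to translate the statement into a question about rational points on dynamical modular curves and to settle it period by period. Conjugating $\phi$ by a suitable element of $\mathrm{PGL}_2(\mathbb{Q})$, one places the critical $3$-cycle at $0\mapsto\infty\mapsto 1\mapsto 0$ with $0$ the critical point; since $\phi(0)=\infty$ together with ramification at $0$ forces $0$ to be a double pole, while $\phi(\infty)=1$ and $\phi(1)=0$ pin down the numerator, one is reduced to the one-parameter family
\[
\phi_a(z)=\frac{z^2+az-a-1}{z^2}=\frac{(z-1)(z+a+1)}{z^2},\qquad a\in\mathbb{A}^1(\mathbb{Q}),
\]
with finitely many excluded values of $a$ (e.g.\ $a=-1$, where $\phi_a$ drops to degree $1$). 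Since $\mathrm{PGL}_2$ is sharply $3$-transitive, this family parametrizes, up to a finite set, the quadratic rational maps over $\mathbb{Q}$ with a marked $\mathbb{Q}$-rational critical $3$-cycle, so the relevant moduli curve is rational.

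For each $N\ge 1$ write $\Phi_{a,N}(z)\in\mathbb{Q}(a)[z]$ for the $N$-th dynatomic polynomial of $\phi_a$, and let $Y_N$ be the smooth model of $\{\Phi_{a,N}(z)=0\}$, with its natural map to the $a$-line (of degree $\deg_z\Phi_{a,N}$) and its automorphism $(a,z)\mapsto(a,\phi_a(z))$ of order $N$. A $\mathbb{Q}$-point of $Y_N$ lying over an admissible $a$ and with $z\notin\{0,1,\infty\}$ is exactly a map as in the conjecture together with an extra $\mathbb{Q}$-rational point of period $N$, and conversely. For $N=3$ the polynomial $\Phi_{a,3}$ has $z$-degree $2^3-2=6$, and the three points $0,1,\infty$ --- of exact period $3$ for every $a$ --- account for a factor of $z$-degree $3$, leaving a residual curve of degree $3$ over the $a$-line; this curve has low genus, so its finitely many rational points can be listed directly, and one checks that none of them gives a new period-$3$ point. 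This disposes of the case $N=3$ unconditionally.

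The case $N=4$ is the main obstacle. The $2^4-2^2=12$ points of exact period $4$ form three $4$-cycles, none meeting the critical cycle, so nothing splits off; passing to $Y_4$ and dividing by the order-$4$ automorphism, one is led to a curve of genus $6$, realized in the appendix as an explicit affine plane curve of degree $5$. Showing that this curve has no $\mathbb{Q}$-rational points beyond the expected ones is the hard step: the genus is too large to apply Chabauty--Coleman directly unless the Mordell--Weil rank of its Jacobian is controlled, so one would bound that rank by descent, look for a lower-genus quotient to which Chabauty applies, and finish with a Mordell--Weil sieve; pending such an argument the point count is taken as one of the two standing assumptions. Finally, for $N\ge 5$ the genus of $Y_N$ grows quickly and no effective tool is available beyond Faltings' finiteness theorem, so these periods remain genuinely open --- this is precisely the content granted by the ``no period $\ge 5$'' hypothesis of the main theorem. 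Together, the unconditional treatment of $N=3$, the conditional resolution of the genus-$6$ curve for $N=4$, and the standing hypothesis for $N\ge 5$ yield Conjecture~\ref{conj:main1}.
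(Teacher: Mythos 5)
Your framing---parametrize maps with the critical $3$-cycle $0\mapsto\infty\mapsto 1\mapsto 0$ (your family is the paper's $\phi_a$ up to a M\"obius change of the parameter) and then analyze the dynatomic curves period by period---matches the paper's approach in Section \ref{support-conj1}. But note first what is actually being claimed: the statement is Conjecture~\ref{conj:main1}, which the paper does not prove, and your proposal does not prove it either. For periods $N\ge 5$ you simply assume the conclusion; the ``no period $\ge 5$'' hypothesis is precisely the remaining content of the conjecture within this family, so the final sentence ``together \dots yield Conjecture~\ref{conj:main1}'' is circular. At best your proposal reconstructs the paper's \emph{partial support}: the unconditional impossibility of extra $\mathbb{Q}$-rational points of period $3$ and of period $4$.

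On those two cases there are genuine gaps. For $N=3$, the residual curve of degree $3$ over the $a$-line is not one whose rational points can be ``listed directly'': it has genus $2$, and the paper must show that the Jacobian of its hyperelliptic model $H_3$ has Mordell--Weil group $\mathbb{Z}/19\mathbb{Z}$ (rank $0$) and then use injectivity of reduction modulo $3$ on $H_3(\mathbb{Q})\hookrightarrow H_3(\mathbb{F}_3)$ to pin down the six rational points; your sketch omits exactly this work. More seriously, for $N=4$ you point at the wrong curve: the genus-$6$ curve of the appendix (Conjecture~\ref{conj:main2}) is the modular curve of the graph N3H2 in Table \ref{table:n3h}---a $2$-cycle together with a chain of preimages---and has nothing to do with period-$4$ points. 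The period-$4$ dynatomic curve $C_4$ has genus $12$, and the relevant quotient by the order-$4$ automorphism, computed by the trace-map/resultant method in Section \ref{sec-4cycle}, is a genus-$1$ curve birational to the elliptic curve 19a3, whose Mordell--Weil group is $\mathbb{Z}/3\mathbb{Z}$; its three rational points are all cusps, so the period-$4$ case is settled \emph{unconditionally}. Your version both misidentifies the quotient (genus $6$ instead of genus $1$) and needlessly makes this case conditional on Conjecture~\ref{conj:main2}, which the paper reserves for an entirely different graph.
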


As (slight) support for this conjecture, we show in section \ref{support-conj1} that a rational function defined over \(\mathbb{Q}\) with a \(\mathbb{Q}\)-rational critical 3-cycle cannot have a \(\mathbb{Q}\)-rational periodic point of period 3 or 4 lying outside of the critical 3-cycle. For the classification of quadratic rational functions with a critical 3-cycle, we also need to assume the following conjecture.

\begin{conjecture}
\label{conj:main2}
The genus \(6\) affine plane curve defined by the affine equation
\begin{align} 
\begin{split} \label{eq:n3h2}
u^5v^2 + 2u^4v^3 - u^4v^2 &- u^4v + u^3v^4 - 4u^3v^2 + u^3 \\
&+ u^2v^4 - 4u^2v^3 
 + 3u^2v - 2uv^3 + 4uv^2 - u + v^2 - v = 0
\end{split}
\end{align}
has Jacobian of Mordell--Weil rank exactly 2, and the $\mathbb{Q}$-rational points on this curve are exactly
\((-1,0), (1,1), (-1,1), (0,0), (0,1),\) and \( (1,0).\)
\end{conjecture}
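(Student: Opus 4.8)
Write $C$ for the smooth projective curve birational to the affine model \eqref{eq:n3h2}. The plan is to determine $\operatorname{Jac}(C)(\Q)$ well enough to run a Chabauty--Coleman argument, exploiting the fact that the asserted Mordell--Weil rank $2$ is strictly smaller than the genus $6$. First I would look for automorphisms of $C$ defined over $\Q$ (or over a small extension): curves arising as dynamical modular curves frequently carry extra symmetry coming from the combinatorics of the underlying preperiodicity graph, and any such automorphism induces an isogeny decomposition $\operatorname{Jac}(C) \sim \prod_i A_i$ into lower-dimensional factors. The hope is that the $A_i$ are elliptic curves and Jacobians of genus $2$ (or at worst genus $3$) curves, on which the Mordell--Weil group can be computed directly --- two-descent for the elliptic factors, two-descent together with standard algorithms for the higher-genus factors --- which would both confirm that the total rank equals $2$ and supply generators.

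Second, for each quotient map $C \to C_i$ with $C_i$ of positive genus I would compute $C_i(\Q)$ exactly: for elliptic quotients this follows at once from the Mordell--Weil group and torsion, and for genus-$2$ quotients from classical Chabauty (the relevant rank is at most $2$, and typically smaller on an individual factor) possibly combined with a Mordell--Weil sieve. Every $\Q$-point of $C$ maps into each $C_i(\Q)$, so intersecting the finitely many preimages cuts the candidate set down; with luck this already isolates the six listed points, together with any points of $C$ lying over the singular locus or at infinity of the plane model, which must be examined separately.

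Third, if the quotient argument leaves ambiguity I would run Chabauty--Coleman directly on $C$ at a prime $p$ of good reduction: with Mordell--Weil rank $2$ and genus $6$ there is a space of dimension at least $4$ of regular differentials whose Coleman integrals vanish at every rational point, imposing strong $p$-adic restrictions inside each residue disc; choosing $p$ so that each residue disc contains at most one of the six known points, and supplementing with a Mordell--Weil sieve (or combining information from several primes) to eliminate the finitely many spurious $p$-adic solutions, should pin down the exact list. Finally one checks that each of the six points is genuinely a rational point of $C$ and that nothing has been overlooked at infinity or at the singularities of \eqref{eq:n3h2}.

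The main obstacle is the first step. Rigorously determining the Mordell--Weil rank of a six-dimensional abelian variety is out of reach of brute-force descent, so everything hinges on finding enough structure --- automorphisms, or explicit covers by lower-genus curves --- to split $\operatorname{Jac}(C)$ into pieces small enough to treat by hand, and then on the Chabauty bound at a single prime actually matching the known point count, which is not automatic and may demand a nontrivial Mordell--Weil sieve. Absent such structure one falls back on general descent machinery for high-dimensional Jacobians, which is precisely where the problem becomes genuinely hard --- which is why the statement is recorded here only as a conjecture.
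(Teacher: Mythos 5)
The decisive step of your plan --- splitting $\operatorname{Jac}(C)$ into elliptic and genus-$2$ (or $3$) factors small enough for descent, and reading off the rank from them --- is exactly the step that is not available, and you offer no substitute for it, so the proposal does not establish the statement (even conditionally). The curve does carry the involution you hope for: it is a double cover of the elliptic curve 53a1 (the curve parametrizing R3P5), giving $J \sim E \times A$; but the complementary factor $A$ has dimension $5$, no further splitting into low-dimensional pieces is found or used, and a Selmer/descent computation on a $5$-dimensional abelian variety is out of reach --- which is precisely why you (correctly) flag this as the main obstacle. The appendix by Stoll gets around it by an analytic, conditional argument rather than by decomposition: he computes the Euler factors of $L(A,s)$ up to $n \approx 105{,}000$ by counting points on the affine model over prime powers, verifies numerically that the data are compatible with the expected functional equation with root number $w_A=-1$ (and not $+1$), evaluates $L'(A,1)\approx 0.0268\neq 0$, and invokes BSD (together with analytic continuation and the functional equation) to conclude $\operatorname{rank} A(\Q)=1$, hence $\operatorname{rank} J(\Q)=2$. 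The lower bound and explicit generators of a finite-index subgroup come not from factors but from the differences of the nine known rational points on the canonical model $D\subset\PP^5$, which are shown to generate a copy of $\Z^2$ (with torsion of exponent dividing $2$). This conditionality is the reason the statement is recorded as a conjecture in the main text and proved only modulo standard $L$-series conjectures in the appendix.

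Your third step is essentially what is carried out once the rank is known, with one simplification worth noting: at $p=5$ the nine known points on $D$ map bijectively onto $D(\F_5)$, so no Mordell--Weil sieve is needed; it suffices to show each residue disc contains at most one rational point, which Stoll does by computing (via Coleman integrals based at $P_4$) the mod-$5$ reduction of the annihilator of $J(\Q)$ in the regular differentials --- spanned by $\omega_1+\omega_4$, $\omega_2-\omega_4$, $\omega_5$, $\omega_6$ --- and checking that the corresponding linear space has no common zero on $D_{\F_5}$. Also be precise about the point count: the smooth model has nine rational points, of which three lie over infinity or the singular locus of the plane model \eqref{eq:n3h2}; the conjecture's list of six refers to the affine plane curve, and those extra points must be (and are) identified explicitly as degenerate.
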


In the appendix to this article M. Stoll proves Conjecture \ref{conj:main2}, conditional on standard conjectures including the BSD conjecture, by enumerating all rational points on the curve defined by Eq.\eqref{eq:n3h2} using the Chabauty--Coleman method.

\begin{theorem}
\label{thm:main1}
Assuming Conjectures \ref{conj:main1} and \ref{conj:main2}, there are exactly six possible preperiodicity graphs for quadratic rational functions defined over $\mathbb{Q}$ with a $\mathbb{Q}$-rational critical \(3\)-cycle. Moreover, the number of preperiodic points of such maps is at most eleven.
\end{theorem}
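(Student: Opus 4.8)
The plan is to adapt the method of Poonen for quadratic polynomials, and of Canci and the author for the period-$2$ case: pass to a one-parameter normal form, isolate the portion of the preperiodicity graph common to the whole family, and reduce each possible enlargement of that portion to the determination of the rational points on an explicit curve.

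\textbf{Normal form.} Since $\mathrm{PGL}_2(\mathbb{Q})$ is sharply $3$-transitive on $\mathbb{P}^1(\mathbb{Q})$, after conjugation I may assume that the critical $3$-cycle is $0\mapsto 1\mapsto\infty\mapsto 0$ with $0$ the critical point. Writing $\phi=P/Q$ with $\deg P,\deg Q\le 2$ and imposing $\phi(0)=1$, $\phi(1)=\infty$, $\phi(\infty)=0$ and $\phi'(0)=0$ forces, after rescaling, the one-parameter family
$$
\phi_s(z)=\frac{sz+1}{(1-z)\bigl((s+1)z+1\bigr)},\qquad s\in\mathbb{Q}\setminus F,
$$
where $F$ is a small finite set (the values at which the map degenerates or the $3$-cycle collapses) and the two critical points of $\phi_s$ are $0$ and $-2/s$. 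This normalization is unique except at the finitely many $s$ for which \emph{both} critical points are periodic of period $3$; that ambiguity is harmless but must be tracked so as not to double-count.

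\textbf{The universal subgraph.} A direct computation gives $\phi_s^{-1}(0)=\{\infty,\,-1/s\}$, $\phi_s^{-1}(1)=\{0\}$ with multiplicity two, and $\phi_s^{-1}(\infty)=\{1,\,-1/(s+1)\}$. Hence for every admissible $s$ the preperiodicity graph contains the critical $3$-cycle together with the two pendant vertices $-1/s\to 0$ and $-1/(s+1)\to\infty$: a fixed subgraph on five vertices, once one checks the finitely many $s$ at which these five points fail to be distinct. By Conjecture \ref{conj:main1} the only periodic points of $\phi_s$ are the points of the critical cycle, the fixed points, and the points of a single exact $2$-cycle; therefore every vertex of the graph is one of these or an iterated preimage of one, and only finitely many candidate vertices need be examined.

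\textbf{Condition curves.} Each way of enlarging the universal subgraph --- adjoining a further preimage of $-1/s$ or of $-1/(s+1)$ and then its deeper preimages, a rational fixed point (which automatically brings its sibling preimage) and its preimages, a rational exact $2$-cycle and its preimages, or the second critical point $-2/s$ --- is governed by a polynomial equation in an auxiliary variable with coefficients in $\mathbb{Q}(s)$. Eliminating the auxiliary variable, or solving for $s$, yields an affine curve, and I would analyze each one in turn: the fixed-point locus and several of the shorter pendant-extension loci are rational curves, so those features are realized for infinitely many $s$; a handful of further loci, among them the exact-$2$-cycle locus, are curves of genus one, which I would treat by two-descent and by computing the Mordell--Weil ranks of their Jacobians; and the most constraining configuration yields precisely the genus-$6$ curve of Equation \eqref{eq:n3h2}, whose six rational points are furnished by Conjecture \ref{conj:main2}. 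Pulling these back to the parameter $s$ identifies the finitely many maps that support any ``extra'' configuration.

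\textbf{Assembly and the bound.} Finally I would build the poset of all graphs that occur, decide which features are simultaneously realizable (two conditions co-occur exactly when their curves share a rational point), discard redundancies, and check that exactly six isomorphism types remain, each realized by an explicit value $s\in\mathbb{Q}$. Counting the vertices of the largest of these graphs gives eleven, and verifying that no admissible combination of features produces more vertices completes the proof.

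\textbf{Where the difficulty lies.} The genus-$6$ curve is the most delicate arithmetic input, but it is delegated to Conjecture \ref{conj:main2} (proved conditionally in the appendix via Chabauty--Coleman). Within the main text the real work is twofold: making the case analysis genuinely exhaustive, so that no potential vertex or combination of vertices is overlooked, and carrying out the descents and rank computations on the genus-one condition curves to confirm that their rational points are exactly the expected ones; these, together with the combinatorial bookkeeping required to see that the final list has precisely six members, are the crux.
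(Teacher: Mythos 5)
Your overall strategy coincides with the paper's (fix a one-parameter normal form with the critical $3$-cycle pinned down, note the universal five-vertex subgraph, translate each possible enlargement into an auxiliary equation over $\mathbb{Q}(s)$, and determine rational points on the resulting curves), and your computations of the normal form, critical points and preimages are correct. However, the two steps that constitute the actual proof are left as intentions, and one of them is mischaracterized. Beyond the genus-$6$ curve delegated to Conjecture \ref{conj:main2}, the argument requires determining \emph{all} rational points on eight further modular curves: three of genus $1$ (rank-$0$ elliptic curves, Cremona 19a3, 11a3, 17a4), three of genus $2$ (handled via rank-$0$ Jacobians with reduction mod $3$ and Mumford-representation arguments, or a rank-$1$ Jacobian with Chabauty--Coleman and the Mordell--Weil sieve), one of genus $3$ (torsion Jacobian $\mathbb{Z}/36\mathbb{Z}$ plus injection of $H(\mathbb{Q})$ into $H(\mathbb{F}_5)$), and one of genus $5$ (reduced through quotient curves to a genus-$2$ Chabauty computation). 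Your plan only foresees ``a handful of genus-one loci'' treated by two-descent, and your specific claim that the exact-$2$-cycle locus is a genus-one curve is wrong: that locus is rational (it underlies the realizable graphs R3P4 and R3P5, which is exactly why $2$-cycles do occur for infinitely many maps), and the genuine obstructions live on the higher-genus curves you do not list.

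More fundamentally, your finiteness and exhaustiveness claims are unsupported. Conjecture \ref{conj:main1} bounds only the periodic part of the graph; it does not bound the rational preimage trees hanging off the periodic points, so ``only finitely many candidate vertices need be examined'' does not follow --- a priori the trees could be arbitrarily deep, and bounding their depth and width is precisely what the nine inadmissibility results are for. The paper makes the case analysis exhaustive via a structural lemma (Lemma 3.2 of the period-$2$ paper): every realizable graph of a non-PCF map is obtained from the base graph by repeatedly applying two moves (adjoin a new cycle together with one non-periodic preimage of each of its points, or adjoin a sibling pair of preimages of a non-periodic vertex), followed by a Hasse-diagram check that any single move applied to any of the six realizable graphs yields either another graph in the diagram or one containing one of exactly nine inadmissible subgraphs. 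Without this reduction (or an equivalent), your ``assembly'' step --- concluding that exactly six isomorphism types survive and that eleven is the maximal number of preperiodic points --- cannot be certified complete; you yourself flag this as ``the crux'', but it is precisely the missing content. A smaller omission: the count of six includes the PCF graph R3P0, whose full preperiodicity graph (no extra preperiodic points beyond the four shown) needs the Lukas--Manes--Yap classification or an independent argument, which your sketch does not provide.
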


The six possible preperiodicity graphs are listed below in Table \ref{table:r3}, and for each graph we provide an example of a rational function which \emph{realizes} this graph (i.e., whose preperiodicity graph is isomorphic to the given graph). We say that a graph is \emph{realizable} if there exists a rational function that realizes it.

The method by which we prove Theorem \ref{thm:main1} is by showing (Section \ref{section:inadmissibility}) that the graphs in Tables \ref{table:n3e}, \ref{table:n3m} and \ref{table:n3h} below are \emph{inadmissible}, i.e. that there exist no quadratic rational functions defined over \(\mathbb{Q}\) with preperiodicity graphs containing isomorphic subgraphs. We prove in Section \ref{section:sufficiency} that this is sufficient to prove Theorem \ref{thm:main1}. The way we prove inadmissibility of a given graph is by constructing a \emph{dynamical modular curve} whose \(\mathbb{Q}\)-rational points correspond to conjugacy classes (under conjugation by automorphisms of \(\mathbb{P}^1\)) of rational functions defined over \(\mathbb{Q}\) with a rational critical \(3\)-cycle that \emph{admit} the graph (i.e. whose preperiodicity graph contains an isomorphic copy of the graph). We then prove that the dynamical modular curve has no \(\mathbb{Q}\)-rational points.

\quad\newline

\textbf{Acknowledgements.} The author deeply thanks Michael Stoll and Maarten Derickx, without whose help and advice this research would not have been completed. The author would also like to thank Benjamin Collas and Konstantin Jakob for many fruitful discussions during the preparation of this article. Finally the author would like to thank Fabrizio Catanese for his support and encouragement. Research for this article has been supported by the Minerva Foundation and by ERC Advanced Grant "TADMICAMT".

\begin{table}[hb]
\begin{center}
\caption{Realizable quadratic rational functions with a $\mathbb{Q}$-rat.\ periodic critical point of period 3} \label{table:r3}
\begin{tabular}{|c|c|c|}\hline

ID& $\phi(z)$                                           & Preperiodicity graph  
\\ 
\hline \hline 
R3P0 & $\displaystyle\frac{1}{(z-1)^2}$ &  \xygraph{ 
                !{<0cm,0cm>;<2cm,0cm>:<0cm,1cm>::} 
                !{(0.7,0.5) }*+{\bullet_{\infty}}="a" 
                !{(0.7,-0.5) }*+{\bullet_{0}}="b" 
                !{(0,0) }*+{\bullet_{1}}="c" 
                !{(-.75,0) }*+{\bullet_{2}}="d"
                "a":@/^/"b"
                "b":@/^/"c"
                "c":@/^/"a"
                "d":"c"
        } 
        \\ 
 \hline

R3P1 & $\displaystyle\frac{2z^2-z-1}{2z^2}$             &  \xygraph{ 
                !{<0cm,0cm>;<2cm,0cm>:<0cm,1cm>::}              
                !{(0,0) }*+{\bullet_{0}}="a" 
                !{(1,0) }*+{\bullet_{\infty}}="b" 
                !{(0.5,-1) }*+{\bullet_{1}}="c" 
                !{(-0.5,-1) }*+{\bullet_{-1/2}}="d"
                !{(1.5,0        ) }*+{\bullet_{-1}}="e"         
                "a":@/^/"b"
                "b":@/^/"c"
                "c":@/^/"a"
                "d":@/^/"a"
                "e":@/^/"c"       
        } 
        \\ 
 \hline

R3P2 & $\displaystyle\frac{z^2+5z-6}{z^2}$              &  \xygraph{ 
                !{<0cm,0cm>;<2cm,0cm>:<0cm,1cm>::} 
                !{(0,0) }*+{\bullet_{0}}="a" 
                !{(1,0) }*+{\bullet_{\infty}}="b" 
                !{(0.5,-1) }*+{\bullet_{1}}="c" 
                !{(-0.5,-1) }*+{\bullet_{-6}}="d"
                !{(1.5,0        ) }*+{\bullet_{6/5}}="e"                
                !{(2,-1) }*+{\bullet_{3}}="f"
                !{(3,0  ) }*+{\bullet_{2}}="g"                          
                "a":@/^/"b"
                "b":@/^/"c"
                "c":@/^/"a"
                "d":@/^/"a"
                "e":@/^/"c"
                "f":"g"   
                "g":@(r,lu) "g"                                         
        } 
        \\ 
 \hline

R3P3 & $\displaystyle\frac{5z^2-7z+2}{5z^2}$            &  \xygraph{ 
                !{<0cm,0cm>;<2cm,0cm>:<0cm,1cm>::} 
                !{(0,0) }*+{\bullet_{0}}="a" 
                !{(1,0) }*+{\bullet_{\infty}}="b" 
                !{(0.5,-1) }*+{\bullet_{1}}="c" 
                !{(-0.5,-1) }*+{\bullet_{2/5}}="d"
                !{(1.5,0        ) }*+{\bullet_{2/7}}="e"                
                !{(-2,1) }*+{\bullet_{2}}="f"
                !{(-2,-1) }*+{\bullet_{1/3}}="g"                                
                "a":@/^/"b"
                "b":@/^/"c"
                "c":@/^/"a"
                "d":"a"
                "e":@/^/"c"
                "f":"d"   
                "g":"d"                                         
        } 
        \\ 
 \hline

R3P4 & $\displaystyle\frac{3z^2-5z+2}{3z^2}$            &  \xygraph{ 
                !{<0cm,0cm>;<2cm,0cm>:<0cm,1cm>::} 
                !{(0,0) }*+{\bullet_{0}}="a" 
                !{(1,0) }*+{\bullet_{\infty}}="b" 
                !{(0.5,-1) }*+{\bullet_{1}}="c" 
                !{(-0.5,-1) }*+{\bullet_{2/3}}="d"
                !{(1.5,0) }*+{\bullet_{2/5}}="e"                
                !{(-0.5,-2) }*+{\bullet_{-2}}="f"
                !{(0.5,-2) }*+{\bullet_{2}}="g"         
                !{(1.5,-2) }*+{\bullet_{1/3}}="h"
                !{(2.5,-2) }*+{\bullet_{1/2}}="i"               
                "a":@/^/"b"
                "b":@/^/"c"
                "c":@/^/"a"
                "d":@/^/"a"
                "e":@/^/"c"   
                "f":"g"
                        "g":@/^/"h"
                "h":@/^/"g"             
                "i": "h"                                                
        } 
        \\ 
 \hline

R3P5 & $\displaystyle\frac{5z^2-11z+6}{5z^2}$           &  \xygraph{ 
                !{<0cm,0cm>;<2cm,0cm>:<0cm,1cm>::} 
                !{(0.5,0) }*+{\bullet_{0}}="a" 
                !{(1.5,0) }*+{\bullet_{\infty}}="b" 
                !{(1,-1) }*+{\bullet_{1}}="c" 
                !{(0,-1) }*+{\bullet_{6/5}}="d"
                !{(2,0) }*+{\bullet_{6/11}}="e"         
                !{(1,-3) }*+{\bullet_{2/3}}="f"
                !{(2,-3 ) }*+{\bullet_{2/5}}="g"                
                !{(3,-3) }*+{\bullet_{3}}="h"
                !{(4,-3 ) }*+{\bullet_{-3/2}}="i"
                !{(0,-2) }*+{\bullet_{6}}="j"
                !{(0,-4) }*+{\bullet_{3/5}}="k"         
                "a":@/^/"b"
                "b":@/^/"c"
                "c":@/^/"a"
                "d":@/^/"a"
                "e":@/^/"c"   
                "f":"g"
                        "g":@/^/"h"
                "h":@/^/"g"             
                "i": "h"
                "j":"f"
                "k":"f"                         
        } 
        \\ 
 \hline

\end{tabular}
\end{center}

\end{table}
\FloatBarrier

We split the nine inadmissible graphs needed for the proof of Theorem \ref{thm:main1} into three groups, according to the genus of the corresponding dynamical modular curve.

\begin{table}[h]
\begin{center}
\caption{Inadmissible graphs for quadratic rational functions with a critical 3-cycle with a modular curve of genus $1$}
\label{table:n3e}
\begin{tabular}{|c|c|c|}\hline

ID                                              & Preperiodicity graph  & genus\\ 
\hline \hline 

N3E1            &  \xygraph{ 
                !{<0cm,0cm>;<2cm,0cm>:<0cm,1cm>::} 
                !{(0,0) }*+{\bullet_{0}}="a" 
                !{(1,0) }*+{\bullet_{\infty}}="b" 
                !{(0.5,-1) }*+{\bullet_{1}}="c" 
                !{(-0.5,0) }*+{\bullet_{}}="d"
                !{(1.5,0        ) }*+{\bullet_{}}="e"           
                !{(2,-1) }*+{\bullet_{}}="h"
                !{(3,0) }*+{\bullet_{}}="i"                                                                     
                !{(4,-1) }*+{\bullet_{}}="j"
                !{(5,0) }*+{\bullet_{}}="k"                                                                                     
                "a":@/^/"b"
                "b":@/^/"c"
                "c":@/^/"a"
                "d":"a"
                "e":@/^/"c"
                "h":"i"
                "i":@(r,lu) "i"                                                         
                "j":"k"
                "k":@(r,lu) "k"                                                                         
        } & 1 \\ 
\hline

N3E2            &  \xygraph{ 
                !{<0cm,0cm>;<2cm,0cm>:<0cm,1cm>::} 
                !{(0,0) }*+{\bullet_{0}}="a_1" 
                !{(1,0) }*+{\bullet_{\infty}}="b_1" 
                !{(0.5,-1) }*+{\bullet_{1}}="c1" 
                !{(-0.5,-1) }*+{\bullet_{}}="d1"
                !{(1.5,0) }*+{\bullet_{}}="e1"          
                !{(2,-1) }*+{\bullet_{}}="f1"
                !{(2,0) }*+{\bullet_{}}="g1"                            
                "a_1":@/^/"b_1"
                "b_1":@/^/"c1"
                "c1":@/^/"a_1"
                "d1":@/^/"a_1"
                "e1":@/^/"c1"       
                "f1":@/^/"e1"
                "g1":@/^/"e1"                           
        }  & 1\\ 
\hline

N3E3            &  \xygraph{ 
                !{<0cm,0cm>;<2cm,0cm>:<0cm,1cm>::} 
                !{(0,0) }*+{\bullet_{0}}="a" 
                !{(1,0) }*+{\bullet_{\infty}}="b" 
                !{(0.5,-1) }*+{\bullet_{1}}="c" 
                !{(-0.5,0) }*+{\bullet_{}}="d"
                !{(1.5,0        ) }*+{\bullet_{}}="e"           
                !{(-1,1) }*+{\bullet_{}}="f"
                !{(-1,-1) }*+{\bullet_{}}="g"   
                !{(2.5,0) }*+{\bullet_{}}="a_2" 
                !{(3.5,0) }*+{\bullet_{}}="b_2" 
                !{(2,0) }*+{\bullet_{}}="c2" 
                !{(4,0) }*+{\bullet_{}}="d2"                            
                "a":@/^/"b"
                "b":@/^/"c"
                "c":@/^/"a"
                "d":"a"
                "e":@/^/"c"
                "f":"d"   
                "g":"d"
                "a_2":@/^/"b_2"
                "b_2":@/^/"a_2"       
                "c2":"a_2"
                "d2":"b_2"
                } & 1\\ 
\hline
\end{tabular}
\end{center}
\end{table}

\begin{table}[h!]
\begin{center}
\caption{Inadmissible graphs for quadratic rational functions with a critical 3-cycle with a modular curve of genus $2$}
\label{table:n3m}
\begin{tabular}{|c|c|c|}\hline

ID                                              & Preperiodicity graph  & genus\\ 
\hline \hline 

N3M1            &  \xygraph{ 
                !{<0cm,0cm>;<2cm,0cm>:<0cm,1cm>::} 
                !{(0,0) }*+{\bullet_{0}}="a" 
                !{(1,0) }*+{\bullet_{\infty}}="b" 
                !{(0.5,-1) }*+{\bullet_{1}}="c" 
                !{(-0.5,0) }*+{\bullet_{}}="d"
                !{(1.5,0        ) }*+{\bullet_{}}="e"           
                !{(-1,1) }*+{\bullet_{}}="f"
                !{(-1,-1) }*+{\bullet_{}}="g"   
                !{(-1.5,2) }*+{\bullet_{}}="h"
                !{(-1.5,0) }*+{\bullet_{}}="i"                          
                "a":@/^/"b"
                "b":@/^/"c"
                "c":@/^/"a"
                "d":"a"
                "e":@/^/"c"
                "f":"d"   
                "g":"d"                                         
                "h":"f"   
                "i":"f"                                                         
        } & 2\\ 
\hline 

N3M2            &  \xygraph{ 
                !{<0cm,0cm>;<2cm,0cm>:<0cm,1cm>::} 
                !{(0,0) }*+{\bullet_{0}}="a" 
                !{(1,0) }*+{\bullet_{\infty}}="b" 
                !{(0.5,-1) }*+{\bullet_{1}}="c" 
                !{(-0.5,0) }*+{\bullet_{}}="d"
                !{(1.5,0        ) }*+{\bullet_{}}="e"           
                !{(-1,1) }*+{\bullet_{}}="f"
                !{(-1,-1) }*+{\bullet_{}}="g"   
                !{(2,-1) }*+{\bullet_{}}="h"
                !{(3,0) }*+{\bullet_{}}="i"                                                                     
                "a":@/^/"b"
                "b":@/^/"c"
                "c":@/^/"a"
                "d":"a"
                "e":@/^/"c"
                "f":"d"   
                "g":"d"
                "h":"i"
                "i":@(r,lu) "i"                                                         
        } & 2\\ 
\hline

N3M3            &  \xygraph{ 
                !{<0cm,0cm>;<2cm,0cm>:<0cm,1cm>::} 
                !{(0,0) }*+{\bullet_{0}}="a_1" 
                !{(1,0) }*+{\bullet_{\infty}}="b_1" 
                !{(0.5,-1) }*+{\bullet_{1}}="c1" 
                !{(-0.5,-1) }*+{\bullet_{}}="d1"
                !{(1.5,0) }*+{\bullet_{}}="e1"          
                !{(2.5,0) }*+{\bullet_{}}="a_2" 
                !{(3.5,0) }*+{\bullet_{}}="b_2" 
                !{(2,0) }*+{\bullet_{}}="c2" 
                !{(4,0) }*+{\bullet_{}}="d2"            
                !{(2.5,-1) }*+{\bullet_{}}="a3" 
                !{(3.5,-1) }*+{\bullet_{}}="b3"                                 
                "a_1":@/^/"b_1"
                "b_1":@/^/"c1"
                "c1":@/^/"a_1"
                "d1":@/^/"a_1"
                "e1":@/^/"c1"       
                "a_2":@/^/"b_2"
                "b_2":@/^/"a_2"       
                "c2":"a_2"
                "d2":"b_2"
                "a3":"b3"
                "b3":@(r,lu) "b3"                                                                               
        }  & 2\\ 
\hline

\end{tabular}
\end{center}
\end{table}

\begin{table}[h]
\begin{center}
\caption{Inadmissible graphs for quadratic rational functions with a critical 3-cycle with a modular curve of genus $>2$}
\label{table:n3h}
\begin{tabular}{|c|c|c|}\hline
ID                                              & Preperiodicity graph  & genus\\ 
\hline \hline 

N3H1    &  \xygraph{ 
                !{<0cm,0cm>;<2cm,0cm>:<0cm,1cm>::} 
                !{(0,0) }*+{\bullet_{0}}="a_1" 
                !{(1,0) }*+{\bullet_{\infty}}="b_1" 
                !{(0.5,-1) }*+{\bullet_{1}}="c1" 
                !{(-0.5,-1) }*+{\bullet_{}}="d1"
                !{(1.5,0) }*+{\bullet_{}}="e1"          
                !{(2,1) }*+{\bullet_{}}="a_2" 
                !{(3,0) }*+{\bullet_{}}="b_2" 
                !{(2,-1) }*+{\bullet_{}}="c2" 
                !{(4,0) }*+{\bullet_{}}="d2"            
                "a_1":@/^/"b_1"
                "b_1":@/^/"c1"
                "c1":@/^/"a_1"
                "d1":@/^/"a_1"
                "e1":@/^/"c1"       
                "a_2":"b_2"
                "c2":"b_2"
                "b_2":"d2"
                "d2":@(r,lu) "d2"                                                                               
        }  & 3\\ 
\hline

N3H2 &  \xygraph{ 
                !{<0cm,0cm>;<2cm,0cm>:<0cm,1cm>::} 
                !{(0,0) }*+{\bullet_{0}}="a" 
                !{(1,0) }*+{\bullet_{\infty}}="b" 
                !{(0.5,-1) }*+{\bullet_{1}}="c" 
                !{(-0.5,-1) }*+{\bullet_{}}="d"
                !{(1.5,0) }*+{\bullet_{}}="e"           
                !{(1,-3) }*+{\bullet_{}}="f"
                !{(2,-3 ) }*+{\bullet_{}}="g"           
                !{(3,-3) }*+{\bullet_{}}="h"
                !{(4,-3 ) }*+{\bullet_{}}="i"
                !{(0,-2) }*+{\bullet_{}}="j"
                !{(0,-4) }*+{\bullet_{}}="k"
                !{(-1,-2) }*+{\bullet_{}}="l"
                !{(-1,-3) }*+{\bullet_{}}="m"                   
                "a":@/^/"b"
                "b":@/^/"c"
                "c":@/^/"a"
                "d":@/^/"a"
                "e":@/^/"c"   
                "f":"g"
                        "g":@/^/"h"
                "h":@/^/"g"             
                "i": "h"
                "j":"f"
                "k":"f"
                "l":"j"
                "m":"j"                         
        } & 6 \\ 
\hline
N3H3 &  \xygraph{ 
                !{<0cm,0cm>;<2cm,0cm>:<0cm,1cm>::} 
                !{(0,0) }*+{\bullet_{0}}="a" 
                !{(1,0) }*+{\bullet_{\infty}}="b" 
                !{(0.5,-1) }*+{\bullet_{1}}="c" 
                !{(-0.5,-1) }*+{\bullet_{}}="d"
                !{(1.5,0) }*+{\bullet_{}}="e"           
                !{(1,-3) }*+{\bullet_{}}="f"
                !{(2,-3 ) }*+{\bullet_{}}="g"           
                !{(3,-3) }*+{\bullet_{}}="h"
                !{(4,-3 ) }*+{\bullet_{}}="i"
                !{(0,-2) }*+{\bullet_{}}="j"
                !{(0,-4) }*+{\bullet_{}}="k"
                !{(5,-4) }*+{\bullet_{}}="l"
                !{(5,-2) }*+{\bullet_{}}="m"                    
                "a":@/^/"b"
                "b":@/^/"c"
                "c":@/^/"a"
                "d":@/^/"a"
                "e":@/^/"c"   
                "f":"g"
                        "g":@/^/"h"
                "h":@/^/"g"             
                "i": "h"
                "j":"f"
                "k":"f"
                "l":"i"
                "m":"i"                         
        } & 5 \\ 
\hline

\end{tabular}
\end{center}
\end{table}

\FloatBarrier

\section{Preliminaries}
\label{sec:orgheadline5}
\subsection{Dynatomic polynomials}
\label{sec:orgheadline2}
Let \(\phi:\mathbb{P}^1\to\mathbb{P}^1\) be a rational function. We write 
\[\phi = [F(X,Y),G(X,Y)]\]
using homogeneous polynomials \(F\) and \(G\).
Let 
\[\phi^n(X,Y) = [F_n(X,Y), G_n(X,Y)]\] 
be the \(n\)-th iterate of \(\phi\) for \(n\geq{1}\), and define
\[
\Phi_{\phi,n}(X,Y) = YF_n(X,Y) - XG_n(X,Y).
\]
We then define the \emph{\(n\)-th dynatomic polynomial} by
\begin{equation*}
\Phi_{\phi,n}^*(X,Y) = \prod_{k|n} (\Phi_{k,\phi}(X,Y))^{\mu(n/k)},
\end{equation*}
where \(\mu\) is the Moebius mu function (for the proof that \(\Phi_{\phi,n}^*\) are actually polynomials, see \cite[Theorem 2.1]{MR1324210}).

It is easy to see that if \(P\) is a periodic point of period \(n\) then \(P\) is a root of \(\Phi_{\phi,n}^*\). The converse is not true however, and a root of \(\Phi_{\phi,n}^*\) can correspond to a periodic point of period strictly dividing \(n\). For any \(n\geq{1}\), we call the roots of \(\Phi_{\phi,n}^*\) periodic points of \emph{formal period} \(n\) for \(\phi\).

In the article we identify (by abuse of notation) the dynatomic polynomials \(\Phi_{\phi,n}^*(X,Y)\) with their dehomogenization \(\Phi_{\phi,n}^*(z)\), where \(z=\frac{X}{Y}\). By dehomogenizing, we ignore the possibility of \(\infty\) being a root of the dynatomic polynomials. This will not be a problem in what follows, however.

\subsection{Linear equivalence of rational functions}
\label{sec:orgheadline3}
We say that two rational functions \(\phi_1,\phi_2:\mathbb{P}^1\rightarrow\mathbb{P}^1\) are \emph{linearly equivalent} if there exists an \(f\in \text{PGL}_2\) acting as a projective automorphism of \(\mathbb{P}^1\) such that \(\phi_2 = \phi_1^f = f^{-1}\phi_1{f}\). 

Let \(\phi_1, \phi_2\) be linearly equivalent. A point \(P\) is periodic of period \(n\) for \(\phi_1\) if and only if \(f^{-1}(P)\) is periodic of period \(n\) for \(\phi_2\) (similarly for preperiodic points). When \(\phi_1, \phi_2\) and \(f\) are all defined over the same base field \(K\), then it is clear that the preperiodicity graphs of \(\phi_1\) and \(\phi_2\) are isomorphic. Therefore when classifying realizable graphs of rational functions defined over \(\mathbb{Q}\) we are actually interested in the \(\mathbb{Q}\)-rational conjugacy classes of quadratic rational functions rather than in the individual maps.

\subsection{Post-critically finite quadratic rational functions \label{sec-pcf}}
\label{sec:orgheadline4}
A rational function \(\phi:\mathbb{P}^1\to\mathbb{P}^1\) is called \emph{post-critically finite} (or PCF for short) if all of its critical points are preperiodic. A rational function has exactly \(2d-2\) critical points (counted with multiplicity; see \cite{MR2316407}, Section 1.2), and a quadratic rational function has exactly \(2\) \emph{distinct} critical points.

Lukas, Manes and Yap \cite{MR3240812} provided a complete classification of all quadratic post-critically rational functions over \(\mathbb{Q}\) and their \(\bar{\mathbb{Q}}\)-conjugacy classes, as well as all possible preperiodicity graphs of these functions. In particular, they showed that a quadratic rational function defined over \(\mathbb{Q}\) with a \(\mathbb{Q}\)-rational critical \(3\)-cycle is PCF if and only if both of its critical points lie in the same \(3\)-cycle, and there is a unique conjugacy class of quadratic rational functions (over \(\mathbb{Q}\)) with this property, realizing the graph R3P0 in Table \ref{table:r3}.

\section{Sufficiency of the nine inadmissibility graphs \label{section:sufficiency}}
\label{sec:orgheadline6}

\begin{prop} \label{thm:main2}
Assume Conjecture~\ref{conj:main1}. Any quadratic rational function with a rational critical \(3\)-cycle that does not realize one of the graphs in Table \ref{table:r3}, must admit one of the graphs in Tables \ref{table:n3e}, \ref{table:n3m} or \ref{table:n3h}.
\end{prop}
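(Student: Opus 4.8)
The plan is to work combinatorially with the finite list of preperiodicity graphs that can occur, using Conjecture~\ref{conj:main1} to bound the periods of rational periodic points outside the critical $3$-cycle. First I would fix notation for a quadratic rational function $\phi$ over $\mathbb{Q}$ with a rational critical $3$-cycle: conjugating by an element of $\mathrm{PGL}_2(\mathbb{Q})$, I may assume the critical $3$-cycle is $0 \mapsto \infty \mapsto 1 \mapsto 0$ with $0$ (say) the periodic critical point, so $\phi$ has the normal form appearing in the examples of Table~\ref{table:r3} (a one-parameter family, after accounting for the second critical point). The backbone of every admissible graph is therefore the $3$-cycle $\{0,\infty,1\}$ together with the extra preimage of $1$ under $\phi$ (since $1 = \phi(0)$ and $0$ is critical, $\phi^{-1}(1) = \{0\}$ with multiplicity $2$, but $\infty$ has a second preimage, and so does $0$; I would carefully record which of $0,1,\infty$ has a rational second preimage in terms of the parameter). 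This gives the ``stem'' common to all six graphs of Table~\ref{table:r3}.

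Next I would enumerate the possible components of $\mathrm{PrePer}(\phi,\mathbb{Q})$. By Conjecture~\ref{conj:main1}, any rational periodic point not in the critical cycle has period $1$ or $2$; a quadratic map has at most two fixed points and at most one $2$-cycle beyond its fixed points (counting with the dynatomic polynomials of Section~2.1), so the periodic part of the graph is the critical $3$-cycle plus at most one fixed point and/or at most one $2$-cycle realized over $\mathbb{Q}$. For the preperiodic (strictly pre-periodic) points, I would use that $\phi$ has degree $2$, so each rational point has at most two rational preimages, and a point in the critical cycle that is a critical value has only one preimage (with multiplicity two); combined with Riemann--Hurwitz / the fact that $\phi$ has exactly two critical points, this tightly constrains the tree of preimages hanging off each periodic point. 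The upshot is a finite, explicitly enumerable list of ``candidate'' graphs $G$ that are consistent with these local constraints. For each such $G$, either $G$ is one of the six graphs of Table~\ref{table:r3} (the maximal ones), or $G$ is a subgraph of one of them, or $G$ contains one of the nine graphs of Tables~\ref{table:n3e}, \ref{table:n3m}, \ref{table:n3h} as a subgraph. I would organize this as a case analysis on: the location of the second critical point (preperiodic or not, and to which periodic cycle it maps), whether a rational fixed point exists, whether a rational $2$-cycle exists, and the depth of the preimage trees.

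The main obstacle, and the bulk of the work, is the bookkeeping in this case analysis: showing that if $\phi$ does \emph{not} realize one of the six graphs in Table~\ref{table:r3}, then the extra structure forced on its graph must contain one of the nine forbidden configurations. Concretely, the ways $\phi$'s graph can fail to be one of the six is by having ``too much'' somewhere---an extra level of preimages, an extra tail on a periodic point, both a fixed point and a $2$-cycle, or a $2$-cycle with preimages of a certain shape---and in each such case I must match the excess against the specific graphs N3E1--3, N3M1--3, N3H1--3. I expect this to split naturally along the genus stratification already indicated in the tables: configurations involving deep preimage trees over the critical cycle land in the genus-$1$ family, configurations adding a further preimage level land in genus $2$, and configurations combining a rational $2$-cycle with rich preimage data over both the $2$-cycle and the critical cycle land in the higher-genus family. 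Throughout I would lean on the degree-$2$ preimage bound and on the critical-point count to rule out any configuration not on the combined list, so that the enumeration is provably exhaustive; the one genuinely non-formal input is Conjecture~\ref{conj:main1}, invoked exactly to cap periodic points outside the critical cycle at period $2$.
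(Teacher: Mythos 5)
Your overall strategy---combinatorial bookkeeping on graphs, with Conjecture~\ref{conj:main1} capping periods outside the critical cycle at $2$---points in the right general direction, but as written there is a genuine gap: the ``local constraints'' you list (the degree-$2$ preimage bound, at most one $2$-cycle, the count of critical points) are not enough to make your trichotomy true, nor to make your candidate list finite. The missing ingredient, which the paper imports as Lemma~3.2 of \cite{MR3656200}, is the structural fact that for a non-PCF map every non-periodic point that is not a critical value has either zero or two rational preimages (the two preimages are the roots of a quadratic with rational coefficients, hence both rational or neither), and that each point of a newly acquired rational cycle automatically carries a rational tail (the second root of $\phi(z)=P$ is rational once the first one is). Without this pairing fact your trichotomy fails: the graph consisting of the basic critical-cycle graph R3P1 together with a \emph{single} rational preimage of the tail point $-1/a$ satisfies all your local constraints, is not a subgraph of any of the six realizable graphs, and contains none of the nine forbidden graphs; it is excluded only because rational preimages come in pairs, which forces N3E2. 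Likewise, without the recursive-generation statement your ``finite, explicitly enumerable list of candidate graphs'' is not finite---preimage trees could a priori be arbitrarily deep---whereas the paper's proof is a finite verification precisely because every realizable graph of a non-PCF map is obtained from R3P1 by the two recursion steps (add a cycle with one tail vertex per cycle point; add a \emph{pair} of preimages to a non-periodic vertex), and one checks on a Hasse diagram that each single step either stays among the listed graphs or creates one of the nine.

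Two further points. You do not treat the PCF case, i.e.\ the map with both critical points in the $3$-cycle realizing R3P0, where the pairing/recursion arguments do not apply (a critical value in the cycle has only one preimage); the paper disposes of this case by citing the Lukas--Manes--Yap classification. Finally, two factual slips: in your normalization $\phi(0)=\infty$, not $1$ (it is $0$ and $1$ that acquire extra rational preimages, while $\infty$ is the critical value with the doubled preimage), and a quadratic rational map has three fixed points counted with multiplicity, not two---the statement that there is at most one \emph{rational} fixed point is an arithmetic theorem (the N3E1 computation), not a degree count, and for the present combinatorial proposition one only needs that two rational fixed points, with their automatic rational tails, force the subgraph N3E1.
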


\begin{proof}
We can arrange the graphs from Tables 1 through 4 in a Hasse diagram with respect to the partial order of subgraph isomorphism:

\begin{equation*}
\xygraph{
!{<0cm,0cm>;<1cm,0cm>:<0cm,1cm>::}
!{(0,0) }*+{{R3P1}}="R3P1" 
!{(-3,1) }*+{{R3P2}}="R3P2"  
!{(-1,1) }*+{{R3P3}}="R3P3"
!{(1,1) }*+{{R3P4}}="R3P4"
!{(3,1) }*+{{\color{red}N3E2}}="N3E2"
!{(-7,2) }*+{{\color{red}N3E1}}="N3E1"  
!{(-5,2) }*+{{\color{red}N3H1}}="N3H1"
!{(-3,2) }*+{{\color{red}N3M2}}="N3M2"  
!{(-1,2) }*+{{\color{red}N3M1}}="N3M1"
!{(1,2) }*+{{\color{red}N3E3}}="N3E3"
!{(3,2) }*+{{R3P5}}="R3P5" 
!{(5,2) }*+{{\color{red}N3M3}}="N3M3"
!{(3,3) }*+{{\color{red}N3H2}}="N3H2"
!{(5,3) }*+{{\color{red}N3H3}}="N3H3"
"R3P1"-"R3P2"
"R3P1"-"R3P3"
"R3P1"-"R3P4"
"R3P1"-"N3E2"
"R3P2"-"N3E1"
"R3P2"-"N3H1"
"R3P2"-"N3M2"
"R3P3"-"N3M1"
"R3P3"-"N3M2"
"R3P3"-"N3E3"
"R3P4"-"N3E3"
"R3P4"-"R3P5"
"R3P4"-"N3M3"
"R3P5"-"N3H2"
"R3P5"-"N3H3"
}
\end{equation*}

Due to the results of \cite{MR3240812} mentioned in Section \ref{sec-pcf}, we can restrict ourselves to non-PCF (see Section \ref{sec-pcf}) quadratic rational functions with a rational critical \(3\)-cycle. Lemma 3.2 in \cite{MR3656200} implies that starting from the critical cycle graph R3P1, one can obtain any realizable graph of a non-PCF rational function defined with a rational critical 3-cycle, by recursively adding vertices and arrows to the graph using either of the following two recursion steps:
\begin{enumerate}
\item \textbf{Adding a periodic cycle}. We add a new cycle \(C\) to the graph. For each vertex \(P\) in \(C\) we add a vertex \(Q\) not in the cycle and an arrow \(Q\to{P}\).
\item \textbf{Adding preimages to a non-periodic point}. We add two vertices \(Q_1,Q_2\) and arrows \newline \(Q_1\to{P}, Q_2\to{P}\), where \(P\) is a vertex not lying in any cycle in the graph.
\end{enumerate}

Now it is easy to check that from any graph in the Hasse diagram, by using one of the recursive steps we either generate a graph that is already in the Hasse diagram, or contains (a subgraph isomorphic to) one of the nine graphs in Tables \ref{table:n3e}, \ref{table:n3m} or \ref{table:n3h}.
\end{proof}

\section{Constructing dynamical modular curves from the graphs}
\label{sec:orgheadline27}

\subsection{Realizable graphs with a critical \(3\)-cycle}
\label{sec:orgheadline13}

\subsubsection{R3P0}
\label{sec:orgheadline7}
Note that graph R3P0 implies that we have two points in the 3-cycle which are critical. Since any quadratic rational function has exactly two distinct critical points, this means any function admitting the graph is \emph{post-critically finite} (or PCF for short), i.e. all its critical points are preperiodic. 

As mentioned in Section \ref{sec-pcf}, Lukas, Manes and Yap proved in \cite{MR3240812} that there is only one possible preperiodicity graph for a PCF quadratic rational function with a critical three cycle, which is graph R3P0 in Table 1. Moreover, there is only one \(\mathbb{Q}\)-conjugacy class of quadratic rational functions that has this preperiodicity graph, with the following representative.

\begin{equation}
\phi_0(z) = \frac{1}{(z-1)^2}.
\end{equation}

\subsubsection{R3P1 \label{sec:R3P1}}
\label{sec:orgheadline8}
We can parametrize the non-PCF \(\mathbb{Q}\)-conjugacy classes of quadratic rational functions with a rational critical \(3\)-cycle using the following map.
\begin{equation}
a\mapsto \phi_a(z)=\frac{(a+1)z^2-az-1}{(a+1)z^2}, \quad a \neq 0, -1, -2.
\end{equation}
For the special values \(a=0,-2\) we get representatives of the conjugacy class realizing the graph R3P0.

In fact, given any rational function \(\phi\) defined over \(\mathbb{Q}\) with a \(\mathbb{Q}\)-rational critical 3-cycle, we can conjugate \(\phi\) by an element of \(\text{PGL}_2(\mathbb{Q})\) to move the periodic critical point \(P\) to \(0\), its image to \(\infty\) and \(\phi^2(P)\) to \(1\). One can then check that we obtain a function \(\phi_a\) for some \(a\in\mathbb{Q}\).

\begin{figure}[h]
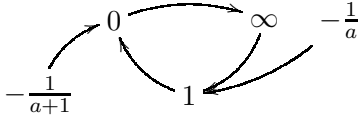

\[
\xygraph{ 
                !{<0cm,0cm>;<2cm,0cm>:<0cm,1cm>::}              
                !{(0,0) }*+{{0}}="a" 
                !{(1,0) }*+{{\infty}}="b" 
                !{(0.5,-1) }*+{{1}}="c" 
                !{(-0.5,-1) }*+{{-\frac{1}{a+1}}}="d"
                !{(1.5,0        ) }*+{{-\frac{1}{a}}}="e"               
                "a":@/^/"b"
                "b":@/^/"c"
                "c":@/^/"a"
                "d":@/^/"a"
                "e":@/^/"c"       
        }
\]
\caption{Preperiodicity graph of \(\phi_a\) \label{fig:R3P1}}
\end{figure}

The preperiodicity graph of \(\phi_a\) where \(a \neq{0,-1,-2}\) will contain the graph in Fig. \ref{fig:R3P1}; it is easy to check that the non-periodic preimages of \(0\) and \(1\) are \(-\frac{1}{a+1}\) and \(-\frac{1}{a}\), respectively.

\subsubsection{R3P2}
\label{sec:orgheadline9}
We take the parametrization \(\phi_a\) of R3P1 and consider a root \(b\) of the first dynatomic polynomial of \(\phi_a\). Such a root must correspond to a fixed point of \(\phi_a\).

\begin{equation}
\Phi_{a,1}^{*}(b) := \Phi_{\phi_a,1}^{*}(b) = (1 + a)b^3 + (-1 - a)b^2 + ab + 1 = 0.
\end{equation}

We solve this equation for \(a\).

\begin{equation} 
a = -\frac{b^{3} - b^{2} + 1}{b(b^{2} - b + 1)}.
\label{eq:a-interms-b}
\end{equation}

We then substitute this into the expression for \(\phi_a\) and get the following parametrization of quadratic rational functions realizing R3P2.

\begin{equation}
b \mapsto \phi_b(z) = \frac{(b-1)z^2 + (b^3 - b^2 + 1)z - b^3 + b^2-b}{(b-1)z^2},
\end{equation}
where \(b\) cannot obtain the values \(0, 1\) in \(\mathbb{Q}\). We remark that \(\phi_b\) degenerates also when \(b\) is a root of \(b^2-b+1 = 0\), but for this case \(b\not\in\mathbb{Q}\). Moreover, the solutions to the equations
\[b^3-b^2+1 = 0 \text{ and } b^3-b^2+2b-1 = 0\]
determine \(a=0\) and \(a=2\), respectively, but these do not have \(\mathbb{Q}\)-rational solutions either.

Each map \(\phi_b\) admits the graph in Fig. \ref{fig:R3P2}.

\begin{figure}[h]
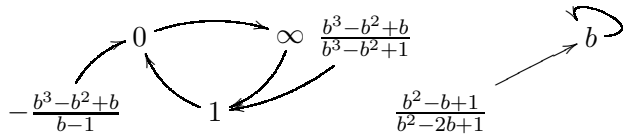

\[
\xygraph{ 
                !{<0cm,0cm>;<2cm,0cm>:<0cm,1cm>::} 
                !{(0,0) }*+{0}="a" 
                !{(1,0) }*+{{\infty}}="b" 
                !{(0.5,-1) }*+{{1}}="c" 
                !{(-0.5,-1) }*+{{-\frac{b^3-b^2+b}{b-1}}}="d"
                !{(1.5,0        ) }*+{{\frac{b^3-b^2+b}{b^3-b^2+1}}}="e"                
                !{(2,-1) }*+{{\frac{b^2-b+1}{b^2-2b+1}}}="f"
                !{(3,0  ) }*+{{b}}="g"                          
                "a":@/^/"b"
                "b":@/^/"c"
                "c":@/^/"a"
                "d":@/^/"a"
                "e":@/^/"c"
                "f":"g"   
                "g":@(r,lu) "g"                                         
        }
\]
\caption{Preperiodicity graph of \(\phi_b\) \label{fig:R3P2}}
\end{figure}

It is easy to check that
\[-\frac{b^3-b^2+b}{b-1}, \quad \frac{b^3-b^2+b}{b^3-b^2+1} \text{ and } \quad \frac{b^2-b+1}{(b-1)^2}\]
are the non-periodic preimages of \(0,1\) and \(b\), respectively.

\subsubsection{R3P3}
\label{sec:orgheadline10}
We start again with \(\phi_a\) parametrizing R3P1 (see Section \ref{sec:R3P1}). Recall that \(-1/(a+1)\) is the non-periodic preimage of \(0\). Let \(c\) be a preimage of \(-1/(a+1)\), i.e. \(\phi(c)=-\frac{1}{a+1}\). This implies the following equation.

\begin{equation}
(a+2)c^2 - ac - 1 = 0.
\end{equation}

We solve this equation for \(a\).

\begin{equation}
a = -\frac{2 \, c^{2} - 1}{c^{2} - c}.
\end{equation}

By substituting this expression for \(a\), we get the following parametrization.

\begin{equation}
c \mapsto \phi_c(z) = \frac{{\left(c^{2} + c - 1\right)} z^{2} - {\left(2 \, c^{2} - 1\right)} z + c^{2} - c}{{\left(c^{2} + c - 1\right)} z^{2}},
\end{equation}
where \(c\) cannot obtain the values \(0, 1\) and \(\frac{1}{2}\) (at the latter value we get \(a=-2\)). Moreover, \(c\) cannot be a solution to the equation
\[c^2+c-1 = 0,\]
as \(\phi_c\) degenerates for these values of \(c\); this equation however has no rational solutions. Similarly, for \(c^2=-\frac{1}{2}\) we get \(a = 0\), but again there are no rational \(c\) satisfying this condition.

\begin{figure}[h]
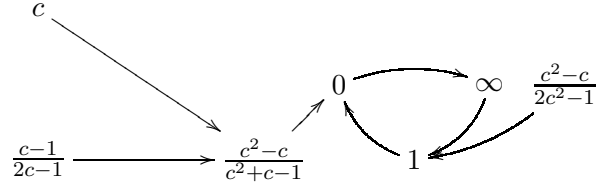

\[
\xygraph{ 
                !{<0cm,0cm>;<2cm,0cm>:<0cm,1cm>::} 
                !{(0,0) }*+{0}="a" 
                !{(1,0) }*+{\infty}="b" 
                !{(0.5,-1) }*+{1}="c" 
                !{(-0.5,-1) }*+{\frac{c^2-c}{c^2+c-1}}="d"
                !{(1.5,0        ) }*+{\frac{c^2-c}{2c^2-1}}="e"         
                !{(-2,1) }*+{c}="f"
                !{(-2,-1) }*+{\frac{c-1}{2c-1}}="g"                             
                "a":@/^/"b"
                "b":@/^/"c"
                "c":@/^/"a"
                "d":"a"
                "e":@/^/"c"
                "f":"d"   
                "g":"d"                                         
        }
\]
\caption{Preperiodicity graph of \(\phi_c\) \label{fig:R3P3}}
\end{figure}

Each map \(\phi_c\) admits the graph in Fig. \ref{fig:R3P3}. It is easy to check that \(\frac{c-1}{2c-1}\) is the second preimage of \(-\frac{1}{a+1} = \frac{c^2-c}{c^2+c-1}\), other than \(c\).

\subsubsection{R3P4}
\label{sec:orgheadline11}
We start with \(\phi_a\) realizing R3P1 and find the expression for the second dynatomic polynomial of \(\phi_a\). A root of this polynomial is a point of formal period \(2\) for \(\phi_a\).

\begin{equation}
\Phi^*_{a,2}(z) := \Phi^*_{\phi_a,2}(z) = (a+1)((a+1)z^2 + (1 - a)z - 1)
\end{equation}

A formal point of period \(2\) for \(\phi_a\) is not of period \(2\) only if it is of period \(1\) and so a root of the first dynatomic polynomial \(\Phi^*_{a,1}(z)\). One can find such points by computing the roots of the resultant of the first two dynatomic polynomials.

\[ \textrm{Res}(\Phi^*_{a,1}(z), \Phi^*_{a,2}(z)) = -(a+1)^4(a^2+2a+5)\]
We see that there are no \(\mathbb{Q}\)-rational values of \(a\) which produce points of formal period \(2\) but of actual period \(1\). 

We see that any \(\mathbb{Q}\)-rational periodic point \(d\) is of period \(2\) for \(\phi_a\) if and only if it is a solution of the following equation.

\begin{equation}
(a+1)d^2 + (1 - a)d - 1 = 0.
\end{equation}

We solve this equation for \(a\).

\begin{equation}
a = -\frac{d^{2} + d - 1}{d^{2} - d}.
\end{equation}

We get the following parametrization for quadratic rational functions admitting the graph R3P4.

\begin{equation}
d \mapsto \phi_d(z)=\frac{(2d - 1)z^2 - (d^2 + d - 1)z + d^2 - d}{(2d - 1)z^2},
\end{equation}
where \(d\) cannot obtain the values \(0,1\) and \(\frac{1}{2}\). When \(d\) is a solution to the equations
\[d^2+d-1 = 0 \quad \text{ and } d^2-3d+1 = 0\]
we get \(a=0\) and \(a=-2\) respectively; these equations have no rational solutions, however.

One can check that the non-periodic preimage of \(d\) is \(-\frac{d}{d-1}\) and its periodic image is \(\frac{d-1}{2d-1}\). The non-periodic preimage of \(\frac{d-1}{2d-1}\) is \(\frac{d-1}{d}\). The non-periodic preimage of \(0\) is \(\frac{d^2-d}{2d-1}\) and the non-periodic preimage of \(1\) is \(\frac{d^2-d}{d^2+d+1}\). Thus the rational function \(\phi_d\) admits the graph in Fig. \ref{fig:R3P4}.

\begin{figure}[h]
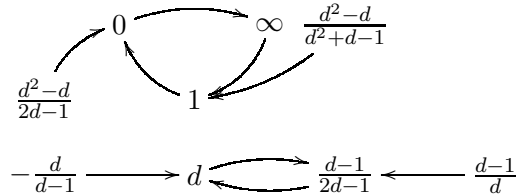

\[
\xygraph{ 
                !{<0cm,0cm>;<2cm,0cm>:<0cm,1cm>::} 
                !{(0,0) }*+{0}="a" 
                !{(1,0) }*+{\infty}="b" 
                !{(0.5,-1) }*+{1}="c" 
                !{(-0.5,-1) }*+{\frac{d^2-d}{2d-1}}="d"
                !{(1.5,0) }*+{\frac{d^2-d}{d^2+d-1}}="e"                
                !{(-0.5,-2) }*+{-\frac{d}{d-1}}="f"
                !{(0.5,-2) }*+{d}="g"           
                !{(1.5,-2) }*+{\frac{d-1}{2d-1}}="h"
                !{(2.5,-2) }*+{\frac{d-1}{d}}="i"               
                "a":@/^/"b"
                "b":@/^/"c"
                "c":@/^/"a"
                "d":@/^/"a"
                "e":@/^/"c"   
                "f":"g"
                        "g":@/^/"h"
                "h":@/^/"g"             
                "i": "h"                                                
        }
\]
\caption{Preperiodicity graph of \(\phi_c\) \label{fig:R3P4}}
\end{figure}

\subsubsection{R3P5 \label{sec-R3P5}}
\label{sec:orgheadline12}
We start with \(\phi_d\) realizing R3P4. Recall that \(d\) is a periodic point of period \(2\) and the value of its non-periodic preimage is \(-\frac{d}{d-1}\).

Let \(w\) be a preimage of \(-\frac{d}{d-1}\). Then \(\phi(w)=-\frac{d}{d-1}\). This gives us the following equation.

\begin{equation}
(4d^2 - 4d + 1)w^2 - (d^3 - 2d + 1)w + d^3-2d^2+d = 0.
\label{eq:R3P5}
\end{equation}

There are four solutions to this equation which correspond to degenerate maps \(\phi_d\), these are
\[(d,w) = (0,0),(0,1), (1,0), (\frac{1}{2}, 1). \]

The discriminant of Eq. \eqref{eq:R3P5} with respect to \(w\) is:

\begin{align*}
\Delta &= d^6 - 16d^5 + 44d^4 - 50d^3 + 28d^2 - 8d + 1 \\ 
&= (d-1)^2(d^4 - 14d^3 + 15d^2 - 6d + 1)
\end{align*}

Using the discriminant, we find that the affine plane curve defined by Eq. \eqref{eq:R3P5} is birational to the affine plane curve defined by

\begin{equation}
y^2 = x^4 - 14x^3 + 15x^2 - 6x + 1,
\label{eq:R3P5b}
\end{equation}

where 
\begin{equation}
x = d, \quad y = \frac{2(4d^2 - 4d + 1)w - (d^3 - 2d + 1)}{d-1}
\end{equation}

or

\begin{equation}
w = \frac{(d-1)y + (d^3 - 2d + 1)}{2(4d^2 - 4d + 1)}.
\end{equation}

We can simplify the curve defined by Eq. \eqref{eq:R3P5b} further to get the elliptic curve

\begin{equation}
E: y^2 + xy + y = x^3 - x^2.
\end{equation}

This elliptic curve is curve 53a1 in the Cremona database (see \cite{MR2282912}), it has rank \(1\) with trivial torsion. We thus get infinitely many rational points on the curve, and thus infinitely many non-linearly equivalent rational functions defined over \(\mathbb{Q}\) admitting graph R3P5 (only finitely many \(\mathbb{Q}\)-rational points on \(E\) will be mapped to degenerate solutions of Eq. \eqref{eq:R3P5}).

\subsection{Nonrealizable graphs with a critical 3-cycle \label{section:inadmissibility}}
\label{sec:orgheadline23}

\subsubsection{N3E1 \label{sec-N3E1}}
\label{sec:orgheadline14}
We start with the graph R3P1 by taking the parametrization \(\phi_a\) we defined in Section \ref{sec:R3P1}. We recall that the general form of a quadratic rational function admitting R3P1 is

\begin{equation}
\phi_a(z)=\frac{(a+1)z^2-az-1}{(a+1)z^2}, \quad a\neq{0,-1,-2}.
\end{equation}

We will show that the first dynatomic polynomial of \(\phi_a\) cannot split over \(\mathbb{Q}\). This immediately implies that a quadratic rational function with a rational critical three-cycle can have at most one \(\mathbb{Q}\)-rational fixed point.

The first dynatomic polynomial of \(\phi_a\) is

\begin{equation}
\Phi^*_{a,1}(z) := \Phi^*_{\phi_a,1}(z) = (-a - 1)z^3 + (a + 1)z^2 -az - 1.
\end{equation}

In R3P2 we determined that a root \(b\) of this polynomial must satisfy
\begin{equation} \label{eq:N3E1-recomputing-phib}
a = -\frac{b^{3} - b^{2} + 1}{b(b^{2} - b + 1)}.
\end{equation}

A necessary and sufficient condition for the cubic polynomial \(\Phi^*_{a,1}\) to split over \(\mathbb{Q}\) is for it to have a \(\mathbb{Q}\)-rational root and for the discriminant to be a square in \(\mathbb{Q}\). The latter condition is defined by the following equation.

\begin{equation} \label{eq:N3E1-discrim}
u^2 = \Delta(\Phi^*_{a,1}) = -3a^4 - 16a^3 - 50 a^2 - 60a - 23.
\end{equation}

Substituting \eqref{eq:N3E1-recomputing-phib} into \eqref{eq:N3E1-discrim} we obtain 
\begin{equation} \label{eq:N3E1-eq}
v^2 = (b-1)(b^3+b^2-b+3),
\end{equation}
where
\begin{equation*}
v = \frac{b^2(b^2-b+1)^2u}{2b^3-4b^2+2b-1}.
\end{equation*}

The affine plane curve \(C\) defined by \eqref{eq:N3E1-eq} is of genus \(1\). We denote by \(\bar{C}\) its projective closure in \(\mathbb{P}^2_{[X:Y:Z]}\) where \(b = X/Z\) and \(v = Y/Z\). The curve \(\bar{C}\) contains (at least) the following two \(\mathbb{Q}\)-rational points:
\[[0:1:0], [1:0:1].\]
It is birational to the elliptic curve with Cremona reference 19a3 with minimal model
\[E : y^2 + y = x^3+x^2+x.\]

\begin{remark}
It is interesting to note that the elliptic curve with Cremona reference 19a3 appears several times in the classification of preperiodicity graphs of rational functions with a rational critical cycle. For example, in \cite{MR3656200} it is proven that it is birational to two modular curves of quite distinct graphs with critical \(2\)-cycles (denoted there by N2E2 and N2E3), and it appears again below in Section \ref{sec-4cycle}. Similarly, the elliptic curve with Cremona reference 17a4 appears both for N3E3 below (see Section \ref{sec-N3E3} below) and for the proof of Proposition 4.3 in \cite{MR3656200}. The elliptic curve with Cremona reference 11a3 appears for N3E2 below (see Section \ref{sec-N3E2} below) and for the graph N2E1 in \cite{MR3656200}.
\end{remark}

The birational map \(\psi:\bar{C}\to{E}\) is given by
\begin{align*}
\psi([X:Y:Z]) = [&2Z^2-2XZ, X^2-2XZ+YZ+Z^2, -2X^2+4XZ-2Z^2].
\end{align*}

The rational points in the locus of indeterminacy of \(\psi\) are exactly the two points we have already discovered on \(\bar{C}\). The elliptic curve \(E\) has Mordell--Weil group of rank 0 and torsion subgroup  isomorphic to \(\mathbb{Z}/3\mathbb{Z}.\) It therefore contains only three rational points, which are
\[[0 : 0 : 1], [0 : 1 : 0], [0 : -1 : 1].\]
The preimages of these points under the birational map \(\psi\), together with \(\psi\)'s locus of indeterminacy give us the full set of points on \(\bar{C}\), and it turns out that the two points we found on \(\bar{C}\) are the only rational points on it. Neither of these points corresponds to a non-degenerate quadratic rational function \(\phi_{\text{b}}\).

As an aside, we prove the following.

\begin{prop}
There exists a unique conjugacy class of quadratic rational functions with a critical 3-cycle whose first dynatomic polynomial has a square \(\mathbb{Q}\)-rational discriminant.
\end{prop}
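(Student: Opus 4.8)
The plan is to translate the condition into a question about rational points on a genus-one curve. By the normalization of Section~\ref{sec:R3P1}, every quadratic rational function over $\mathbb{Q}$ with a rational critical $3$-cycle is $\mathbb{Q}$-linearly equivalent to $\phi_a$ for a unique $a\in\mathbb{Q}\setminus\{-1\}$ (the PCF class R3P0 being given by $a=0$ and $a=-2$). Since $\mathrm{PGL}_2(\mathbb{Q})$-conjugation multiplies the discriminant of the binary cubic form $\Phi_{\phi,1}^*(X,Y)$ by a square, and since $\infty$ is never a fixed point of $\phi_a$ (so that this discriminant equals that of the dehomogenized cubic $\Phi_{a,1}^*(z)$, namely $\Delta(a):=\Delta(\Phi_{a,1}^*)=-3a^4-16a^3-50a^2-60a-23$ from Eq.~\eqref{eq:N3E1-discrim}), the property ``the first dynatomic polynomial has square discriminant'' depends only on the conjugacy class and, for the class of $\phi_a$, amounts to $\Delta(a)$ being a square in $\mathbb{Q}$. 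The PCF values $a=0,-2$ give $\Delta=-23$, which is not a square; so it remains to count the $a\in\mathbb{Q}\setminus\{0,-1,-2\}$ with $\Delta(a)$ a square.

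Next I pass to the curve
\[
C':\ u^2=-3a^4-16a^3-50a^2-60a-23=(a+1)\bigl(-3a^3-13a^2-37a-23\bigr),
\]
which has genus $1$ and the rational branch point $(a,u)=(-1,0)$; since the cubic factor has a single real root, which is irrational, $C'(\mathbb{R})$ is a single bounded oval, a heuristic sign that $C'$ has very few rational points. Taking $(-1,0)$ as origin, the change of variables $S=4/(a+1)$, $V=4u/(a+1)^2$ (with inverse $a=(4-S)/S$, $u=4V/S^2$) defines a birational map, extending to an isomorphism of smooth projective models that sends $(-1,0)$ to $O$, onto the elliptic curve
\[
E':\ V^2=S^3-20S^2-16S-48.
\]
One checks that $E'$ has conductor $19$ (a minimal model is $y^2+y=x^3+x^2-9x-15$), with Mordell--Weil rank $0$ and torsion subgroup $\mathbb{Z}/3\mathbb{Z}$; the torsion is generated by $(28,76)$, whose tangent line meets $E'$ only at $(28,76)$, to order $3$. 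Hence $E'(\mathbb{Q})=\{O,\ (28,76),\ (28,-76)\}$.

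Pulling these three points back through the isomorphism — the two points of $C'$ over the singular point at infinity corresponding to $(0,\pm4\sqrt{-3})\in E'$, hence not rational — gives $C'(\mathbb{Q})=\{(-1,0),\ (-6/7,\ 19/49),\ (-6/7,\ -19/49)\}$. The value $a=-1$ is excluded, while $a=-6/7\notin\{0,-1,-2\}$ and $\Delta(-6/7)=(19/49)^2\neq 0$. Therefore $\Delta(a)$ is a square for exactly one admissible value, $a=-6/7$, yielding the unique conjugacy class, that of
\[
\phi_{-6/7}(z)=\frac{z^2+6z-7}{z^2}=\frac{(z+7)(z-1)}{z^2}.
\]
(In passing: $\Phi_{-6/7,1}^*(z)$ is a constant times $z^3-z^2-6z+7$, which is irreducible over $\mathbb{Q}$ with cyclic Galois group of order $3$ — its discriminant is $19^2$ — so $\phi_{-6/7}$ still has no rational fixed point, in line with Section~\ref{sec-N3E1}.)

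The one step requiring genuine input is knowing $E'(\mathbb{Q})$ exactly, i.e.\ that $E'$ has rank $0$; for a conductor-$19$ curve this is standard and rigorously established (a $2$-descent, or Cremona's tables \cite{MR2282912}). The only delicate point is making sure that no rational point of $C'$ is overlooked at infinity or at the indeterminacy locus of the parametrization, which is handled by the explicit extension of the isomorphism $C'\cong E'$ above.
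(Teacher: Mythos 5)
Your proof is correct and follows essentially the same route as the paper: both reduce the statement to enumerating the rational points on the genus-one discriminant curve $u^2=-3a^4-16a^3-50a^2-60a-23$, identify it birationally with the rank-zero conductor-$19$ elliptic curve (Cremona 19a1) whose only rational points are its three $3$-torsion points, and pull these back to find only $a=-1$ (degenerate) and $a=-6/7$, giving the unique class. The differences are cosmetic: you use a cleaner hand-made change of variables exploiting the rational root $a=-1$ of the quartic instead of the paper's machine-computed map to the minimal model, and you make explicit the conjugation-invariance of the square class of the discriminant and the treatment of the points at infinity, which the paper leaves implicit.
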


\begin{proof}
The affine plane curve \(C_1\) described by Eq.\eqref{eq:N3E1-discrim} has genus \(1\), and it is birational to the elliptic curve with Cremona reference 19a1 with the following minimal model.

\begin{equation}
E_1: y^2 + y = x^3 + x^2 - 9x - 15.
\end{equation}

We consider the projective closure \(\bar{C}_1\) of \(C_1\) in \(\mathbb{P}^2_{[X:Y:Z]}\) where \(a = \frac{X}{Z}\) and \(u = \frac{Y}{Z}\).
The birational map \(\psi:\bar{C}_1\to{E_1}\) is given by

\begin{align*}
\psi = [
&1491X^3 + 4176X^2Z - 133XYZ + 3821XZ^2 - 114YZ^2 + 1146Z^3,\\
&533X^3 + 2839X^2Z + 171XYZ + 3891XZ^2 + 95YZ^2 + 1565Z^3,\\
&-686X^3 - 1764X^2Z - 1512XZ^2 - 432Z^3
].
\end{align*}

The elliptic curve \(E_1\) has Mordell--Weil rank 0, and torsion subgroup isomorphic to \(\mathbb{Z}/3\mathbb{Z}\). The three rational points on \(E\) are 

\begin{equation}
[0 : 1 : 0], [5 : 9 : 1], [5 : -10 : 1].
\end{equation}

The only rational preimages of these points under \(\psi\), together with \(\psi\)'s points of indeterminacy are the four points

\begin{equation}
[-6/7 : -19/49 : 1], [-6/7 : 19/49 : 1], [0 : 1 : 0], [-1 : 0 : 1].
\end{equation}

These are therefore the only rational points on the curve \(\bar{C}_1\). The third point is at infinity and is therefore a "\emph{cusp}" (a point added to the dynamical modular curve to obtain a projective curve), while the fourth point corresponds to the case \(a=-1\) for which \(\phi_a\) is a degenerate map. 

It remains to check the case \(a = -\frac{6}{7}\). For this map we get the following first dynatomic polynomial
\begin{equation}
\Phi_{-\frac{6}{7},1}^*(z)=-\frac{1}{7}(z^3-z^2-6z+7)
\end{equation}
whose discriminant \(\frac{361}{2401} = (\frac{19}{49})^2\) is a square in \(\mathbb{Q}\) as expected. However, this polynomial is irreducible and therefore \(\phi_{-\frac{6}{7}}\) has no \(\mathbb{Q}\)-rational fixed points.
\end{proof}

\subsubsection{N3E2 \label{sec-N3E2}}
\label{sec:orgheadline15}

We start with the parametrization \(\phi_a\) of R3P1. Recall that the non-periodic preimage of \(1\) is \(-\frac{1}{a}\). Assume that \(-\frac{1}{a}\) has a preimage \(t\), i.e. \(\phi(t)= -\frac{1}{a}\). This implies the following equation.

\begin{equation}
a^{2} t^{2} - a^{2} t + 2a t^{2} + t^{2} - a = 0.
\end{equation}

The curve \(C_2\) described by this equation is of genus \(1\). We take the projective closure \(\bar{C}_2\) of \(C_2\) in \(\mathbb{P}^2_{[X:Y:Z]}\) where \(a = \frac{X}{Z}\) and \(t = \frac{Y}{Z}\). \(\bar{C}_2\) contains (at least) the following 4 rational points:

\begin{equation}
[0 : 1 : 0], [0 : 0 : 1], [1 : 0 : 0], [-1 : 1 : 1].
\end{equation}

The affine plane curve \(\bar{C}_2\) is birational to the elliptic curve \(E\) with reference 11a3 in the Cremona database. Its minimal model is given by
\begin{equation}
E: y^2 + y = x^3 - x^2.
\end{equation}

This elliptic curve has Mordell--Weil rank \(0\) and a torsion subgroup isomorphic to \(\mathbb{Z}/5\mathbb{Z}\) and \(5\) rational points:

\begin{equation}
[0 : 1 : 0], [0 : 0 : 1], [0 : -1 : 1], [1 : 0 : 1], [1 : -1 : 1].
\end{equation}

The birational map \(\psi:\bar{C}_2\to{E}\) is defined by

\begin{align*}
\psi = [&
XY^3 - XY^2Z + Y^3Z - YZ^3, \\
&-XY^2Z + Y^3Z + XYZ^2 - 2Y^2Z^2 + Z^4, \\
&-Y^3Z].
\end{align*}

The preimages of the rational points under \(\psi\) together with \(\psi\)'s points of indeterminacy (\([1:0:0], [0:1:0]\)) are the four points of \(\bar{C}_2\) we have already discovered. Therefore these are all the rational points on \(\bar{C}_2\).

None of these points corresponds to a non-degenerate map \(\phi_{\text{a}}\), therefore graph N3E2 is non-realizable.

\subsubsection{N3E3 \label{sec-N3E3}}
\label{sec:orgheadline16}

We start the quadratic rational function \(\phi_c\) parametrizing R3P3 (note that we could also have started with R3P4).

\begin{equation}
\phi_c(z) = \frac{{\left(c^{2} + c - 1\right)} z^{2} - {\left(2 \, c^{2} - 1\right)} z + c^{2} - c}{{\left(c^{2} + c - 1\right)} z^{2}}, \quad c\neq 0, 1, \frac{1}{2}.
\end{equation}

We consider the second dynatomic polynomial for \(\phi_c\).

\begin{align*}
\Phi_{c,2}^*(z) := \Phi_{\phi_c,2}^*(z) = &(c^{4} + 2 c^{3} - c^{2} - 2 c + 1) z^{2} \\
& + (-3 c^{4} - 2 c^{3} + 5 c^{2} - 1) z + c^{4} - 2 c^{2} + c
\end{align*}

This polynomial is divisible by \(c^2+c-1\); since the map \(\phi_c\) degenerates for the two values of \(c\) which are its roots, we can divide by it to get
\begin{equation}
c^2z^2 - 3c^2z + c^2 + cz^2 + cz - c - z^2 + z = 0.
\end{equation}

This equation describes a genus \(1\) affine plane curve which we will denote by \(C_3\), and its projective closure in \(\mathbb{P}^2_{[X:Y:Z]}\) by \(\bar{C}_3\), where \(c = \frac{X}{Z}\) and \(z = \frac{Y}{Z}\). The curve \(\bar{C}_3\) contains (at least) the following 6 points:

\begin{equation}
[1 : 1 : 1], [0 : 1 : 0], [0 : 0 : 1], [0 : 1 : 1], [1 : 0 : 0], [1 : 0 : 1].
\end{equation}

This curve is birational to the elliptic curve

\begin{equation}
E : y^2 + xy + y = x^3 - x^2 - x
\end{equation}
with Cremona reference 17a4. This curve has Mordell-Weil rank 0 and torsion subgroup isomorphic to \(\mathbb{Z}/4\mathbb{Z}\), and thus contains exactly four \(\mathbb{Q}\)-rational points which are
\[[0:0:1],[0:1:0],[0:-1:1],[1:-1:1].\] 

The map \(\psi\) from \(\bar{C}_3\) to \(E\) is defined by

\begin{align*}
\psi = [&XY^3 - 4XY^2Z + 4XYZ^2 + Y^2Z^2 - XZ^3 - 
    2YZ^3 + Z^4, \\
&XY^2Z - 3XYZ^2 + XZ^3 + YZ^3 - Z^4, \\
&-Y^3Z + 3Y^2Z^2 - 3YZ^3 + Z^4]. \\
\end{align*}

The points of indeterminacy of \(\psi\) are \([0 : 1 : 1], [0 : 1 : 0], [1 : 0 : 0]\). 

The preimages of the four rational points of \(E\) under \(\psi\) together with \(\psi\)'s points of indeterminacy give us all the rational points of \(\bar{C}_3\), and these are exactly the six points that we have already discovered, none of which correspond to non-degenerate quadratic rational functions.

\subsubsection{N3M1}
\label{sec:orgheadline17}

We start with the parametrization \(\phi_c\) of the graph R3P3. Recall that \(c\) is a preperiodic point whose image under \(\phi_c\) is the non-periodic preimage of \(0\). Let \(w\) be a preimage of \(c\) under \(\phi_c\), i.e. \(\phi(w)=c\). This implies
\begin{equation}
-c^3w^2 - 2c^2w + 2cw^2 + c^2 - w^2 - c + w = 0
\end{equation}

This equation describes an affine plane curve that we denote by \(C_4\), and we denote its projective closure in \(\mathbb{P}^2_{[X:Y:Z]}\) by \(\bar{C}_4\), where \(c = \frac{X}{Z}\) and \(w = \frac{Y}{Z}\). This curve is of genus \(2\) and contains (at least) the following five points.

\begin{equation}
[0 : 1 : 0], [0 : 0 : 1], [0 : 1 : 1], [1 : 0 : 0], [1 : 0 : 1].
\end{equation}

The curve \(\bar{C}_4\) is birational to the hyperelliptic curve given by

\begin{equation}
H: y^2 = x^6 - 2x^5 + 3x^4 - 4x^3 - x^2 + 2x + 1.
\end{equation}

The birational map \(\psi:\bar{C}_4\to{H}\) is defined by

\begin{align*}
\psi = [-XZ + Z^2, 2X^3YZ^2 + 2X^2Z^4 - 4XYZ^4 + 2YZ^5 - Z^6, -XZ].
\end{align*}

The latter curve \(H\) contains the following points (\emph{in weighted homogeneous coordinates}):

\begin{equation}
[1 : -1 : 0], [1 : 1 : 0], [0 : -1 : 1], [0 : 1 : 1], [1 : 0 : 1].
\end{equation}

The hyperelliptic curve has Jacobian with torsion subgroup isomorphic to \(\mathbb{Z}/15\mathbb{Z}\), and a rank bound computation in \textsc{Magma} \cite{MR1484478} tells us the Jacobian has Mordell--Weil rank 0. 

The curve \(H\) has good reduction at \(3\), and therefore the Mordell-Weil group of the Jacobian \(J(\mathbb{Q})\) of \(H\) injects into \(J(\mathbb{F}_3)\). Therefore all the maps in the following commutative diagram are injective.

\begin{center}
\begin{tikzcd}
H(\mathbb{Q}) \arrow{d} \arrow{r}
&J(\mathbb{Q}) \arrow{d}\\
H(\mathbb{F}_3) \arrow{r} & J(\mathbb{F}_3)
\end{tikzcd}
\end{center}

However, one can easily check that the only rational points in \(H(\mathbb{F}_3)\) are the reductions \(\pmod{3}\) of the five points in \(H(\mathbb{Q})\) we have already discovered. Therefore these five points are the only \(\mathbb{Q}\)-rational points on \(H\). 

Using \(\psi\) we pull back these five points to \(C_4\), and together with the indeterminacy points of \(\psi\) (these are \([0 : 1 : 0], [1 : 0 : 0]\)) we find that the only rational points on \(C\) are the five points we found before. None of these points correspond to non-degenerate quadratic rational functions.

\subsubsection{N3M2}
\label{sec:orgheadline18}

We start with the parametrization \(\phi_c\) of the graph R3P3, and consider a root \(w\) of its first dynatomic polynomial. Such a root must correspond to a fixed point of \(\phi_c\). 

\begin{align*}
\Phi_{c,1}^*(w) := \Phi_{\phi_c,1}^*(w) = &(-c^2 - c + 1)w^3 + (c^2 + c - 1)w^2  \\
& + (-2c^2 + 1)w + c^2 - c = 0.
\end{align*}

We denote by \(C_5\) the affine plane curve defined by this equation, and by \(\bar{C}_5\) its projective closure in \(\mathbb{P}^2_{[X:Y:Z]}\), where \(c = \frac{X}{Z}\) and \(w = \frac{Y}{Z}\). This is a curve of genus 2 and contains (at least) the following four rational points
\begin{equation}
[0 : 1 : 0], [0 : 0 : 1], [1 : 0 : 0], [1 : 0 : 1].
\end{equation}
This curve is birational to the following hyperelliptic curve

\begin{equation}
H_5: \quad y^2 = x^6 - 2x^5 + 5x^4 - 6x^3 + 10x^2 - 8x + 5.
\end{equation}

The birational map \(\psi:\bar{C}_5\to{H_5}\) is defined by

\begin{align*}
\psi = [&-YZ + Z^2, \\
&2XY^3Z^2 - 2XY^2Z^3 + Y^3Z^3 + 4XYZ^4 - 
    Y^2Z^4 - 2XZ^5 + Z^6, \\
&-YZ].
\end{align*}

One can use \textsc{Magma} to check that the Jacobian \(J\) of \(H_5\) has Mordell--Weil torsion subgroup isomorphic to \(\mathbb{Z}/5\mathbb{Z}\) and rank 0. The curve \(H_5\) contains (at least) the following two rational points (in weighted homogeneous coordinates).
\begin{equation}
[1 : -1 : 0], [1 : 1 : 0].
\end{equation}

The Jacobian \(J\) contains (at least) the following rational points in Mumford representation:

\begin{align*}
&(1, 0, 0), (1, x^3 - x^2, 2), (1, -x^3 + x^2, 2), (x^2 - x + 1, x - 1, 2), 
(x^2 - x + 1, -x + 1, 2).
\end{align*}

We denote by \(P_+ = [1 : 1 : 0]\) and \(P_- = [1 : -1 : 0]\), the two points at infinity. Using this notation we get the following divisor representations on \(H\):
\begin{align*}
(1,0,0) &\equiv identity, \\
(1, x^3 - x^2, 2) &\equiv P_+ - P_-, \\
(1, -x^3 + x^2, 2) &\equiv P_- - P_+, \\
(x^2 - x + 1, x - 1, 2) &\equiv [\alpha:-\bar\alpha:1] + 
[\bar\alpha:-\alpha:1]
- P_+ - P_-,
 \\
(x^2 - x + 1, -x + 1, 2) &\equiv
[\alpha:\bar\alpha:1] + [\bar\alpha:\alpha:1]
- P_+ - P_-,
\end{align*}
where \(\alpha=\frac{1}{2}+\frac{\sqrt{-3}}{2}\).
The only rational points appearing in these representations are the two we already found,
and therefore they are the only rational points on \(H_5\).

Pulling back the two rational points of \(H_5\) under \(\psi\) and adding its indeterminacy points, we find all rational points on \(C_5\), and these are exactly the four points we found before. None of these points correspond to non-degenerate quadratic rational functions.

\subsubsection{N3M3}
\label{sec:orgheadline19}

We start with R3P4 and consider a root \(w\) of first dynatomic polynomial of \(\phi_d\) (note that we can also start with R3P2 and look at the second dynatomic polynomial of \(\phi_b\)). Such a root corresponds to a fixed point of \(\phi_d\). 

\begin{align*}
\Phi_{d,1}^*(w) := \Phi_{d,1}^*(w) = &(-2d + 1)w^3 + (2d - 1)w^2 + (-d^2 - d + 1)w + d^2 - d=0
\end{align*}

The affine plane curve \(C_6\) described by this equation is of genus \(2\). We denote its projective closure in \(\mathbb{P}^2_{[X:Y:Z]}\) by \(\bar{C}_6\), where \(d = \frac{X}{Z}\) and \(w = \frac{Y}{Z}\). The curve \(\bar{C}_6\) contains (at least) the following five rational points
\begin{equation}
[0 : 1 : 0], [0 : 0 : 1], [1 : 2 : 2], [1 : 0 : 0], [1 : 0 : 1],
\end{equation}
and is birational to the hyperelliptic curve

\begin{equation}
H: y^2 = x^6 + 2x^5 + 5x^4 + 8x^3 + 12x^2 + 8x + 4
\end{equation}
under the map \(\psi:\bar{C}_6\to{H}\) defined by
\begin{align*}
\psi = [Z, 2Y^3 + 2XYZ - 2Y^2Z - 2XZ^2 + YZ^2 + Z^3, -Y].
\end{align*}

Using \textsc{Magma} one can compute that the hyperelliptic curve \(H\) has Jacobian with Mordell--Weil torsion subgroup isomorphic to \(\mathbb{Z}/2\mathbb{Z}\) and rank bounded by 1. The curve \(H\) contains (at least) the following six rational points

\begin{equation}
[1 : -1 : 0], [1 : 1 : 0], [-1 : -2 : 1], [-1 : 2 : 1], [0 : -2 : 1], [0 : 2 : 1].
\end{equation}

We denote by \(P_+ = [1 : 1 : 0]\) and \(P_- = [1 : -1 : 0]\), the two points at infinity. One can check that \(P_+ - P_-\) is of infinite order, and therefore the Mordell-Weil rank of the Jacobian is exactly 1. Using the Chabauty--Coleman/ Mordell--Weil sieving algorithm (see Bruin and Stoll \cite{MR2685127}) implementation in \textsc{Magma}, we determine that the six points we have already found are the only rational points on \(H\). Pulling them back to \(\bar{C}_6\), we find (together with the indeterminacy point \([1:0:0]\) of \(\psi\)) that the only rational points on \(\bar{C}_6\) are the five points we have already found. None of these correspond to non-degenerate quadratic rational functions.

\subsubsection{N3H1}
\label{sec:orgheadline20}

We start with R3P2. Recall that

\begin{equation}
\phi_b(z) = \frac{(b-1)z^2 + (b^3 - b^2 + 1)z - b^3 + b^2-b}{(b-1)z^2}, \quad b\neq{0,1}
\end{equation}
is a parametrization of R3P2, where \(b\) is the fixed point and \(\frac{b^2-b+1}{(b-1)^2}\) is its non-periodic preimage. Assume \(w\) is a preimage of \(\frac{b^2-b+1}{(b-1)^2}\), i.e. \(\phi_b(w)=\frac{b^2-b+1}{(b-1)^2}\). This implies

\begin{equation}
bw^2 - (b^4 - 2b^3 + b^2 + b - 1)w + b^4 - 2b^3 + 2b^2 - b = 0.
\end{equation}

We denote by \(C_7\) the affine plane curve defined by this equation, and by \(\bar{C}_7\) its projective closure in \(\mathbb{P}^2_{[X:Y:Z]}\), where \(b = \frac{X}{Z}\) and \(w = \frac{Y}{Z}\). The curve \(\bar{C}_7\) has genus 3, and contains (at least) the following rational points

\begin{equation}
[0 : 1 : 0], [0 : 0 : 1], [1 : 0 : 0], [1 : 0 : 1].
\end{equation}

The curve \(\bar{C}_7\) is birational to the following hyperelliptic curve

\begin{equation}
H: y^2 = 4x^7 - 11x^6 + 14x^5 - 7x^4 - 2x^3 + 6x^2 - 4x + 1.
\end{equation}

The birational map \(\psi:\bar{C}_7\to{H}\) is defined by:

\begin{align*}
\psi = [-Z, X^4 - 2X^3Z + X^2Z^2 - 2XYZ^2 + XZ^3 - Z^4, X - Z].
\end{align*}

The curve \(H\) contains (at least) the following five points:

\begin{equation}
[1 : 0 : 0], [0 : -1 : 1], [0 : 1 : 1], [1 : -1 : 1], [1 : 1 : 1].
\end{equation}

Using \textsc{Magma}, one can compute that the curve \(H\) has Jacobian \(J\) with Mordell--Weil rank 0, and torsion subgroup of order bounded by \(72\), and a two-torsion subgroup isomorphic to \(\mathbb{Z}/2\mathbb{Z}\).

Let \(P=[0 : -1 : 1], Q=[1 : -1 : 1]\). The divisor class [P-Q] has order 36 in the Jacobian. This means that if the torsion subgroup has order 72 then it is isomorphic to either \(\mathbb{Z}/72\mathbb{Z}\) or \(\mathbb{Z}/36\mathbb{Z}\times\mathbb{Z}/2\mathbb{Z}\). However, the second option is ruled out by the structure of the two-torsion subgroup. Therefore the only two options for the torsion subgroup are either \(\mathbb{Z}/72\mathbb{Z}\) or \(\mathbb{Z}/36\mathbb{Z}\).

The image of \([P-Q]\) under the map \(J(\mathbb{Q})\to J(\mathbb{F}_5) \cong \mathbb{Z}/72\mathbb{Z}\times\mathbb{Z}/2\mathbb{Z}\) is \emph{not} divisible by \(2\). In fact, under the identification of \(J(\mathbb{F}_5)\) with \(\mathbb{Z}/72\mathbb{Z}\times\mathbb{Z}/2\mathbb{Z}\), \([P-Q]\) is mapped to the element \((38,1)\), which is not divisible by \(2\). 

Therefore the Mordell--Weil group is isomorphic to \(\mathbb{Z}/36\mathbb{Z}\), and is generated by \([P-Q]\). The curve \(H\) has good reduction at \(5\) so that all the maps in the following commutative diagram are injective. 
\begin{center}
\begin{tikzcd}
H(\mathbb{Q}) \arrow{d}{} \arrow{r}{}
&J(\mathbb{Q}) \arrow{d}{\imath}\\
H(\mathbb{F}_5) \arrow{r}{} & J(\mathbb{F}_5)
\end{tikzcd}
\end{center}

There are six points in \(H(\mathbb{F}_5)\) (given in weighted homogeneous coordinates):
\begin{equation}
[1 : 0 : 0], [0 : 1 : 1], [0 : 4 : 1], [1 : 1 : 1], [1 : 4 : 1], [2 : 0 : 1].
\end{equation}
Of these points, only the five points \([1 : 0 : 0], [0 : 1 : 1], [0 : 4 : 1], [1 : 1 : 1], [1 : 4 : 1]\) map to the image of the map \(\imath\) in the diagram, and these exactly correspond to the five points we know on \(H\). Since the map \(H(\mathbb{Q})\to{H(\mathbb{F}_5)}\) is injective, this proves that these are the only rational points on \(H\). 

Combining the preimages of \(\psi\) together with its locus of indeterminacy, we find all rational points on \(\bar{C}_7\), and these are exactly the four points we already discovered. None of these four points correspond to \(\phi_b\) admitting the graph N3H1.

\subsubsection{N3H2}
\label{sec:orgheadline21}

For the proof that the graph N3H2 is non-admissible (up to standard conjectures) see the appendix by M. Stoll.

\subsubsection{N3H3}
\label{sec:orgheadline22}

We start with R3P5. Recall that graph R3P4 had the parametrization
\begin{equation}
\phi_d(z)=\frac{(2d - 1)z^2 - (d^2 + d - 1)z + d^2 - d}{(2d - 1)z^2}, \quad d\neq{0,1,\frac{1}{2}},
\end{equation}
where \(d\) is a periodic point of (formal) period \(2\), and the graph R3P5 was represented by the following additional condition.

\begin{equation}
(4d^2 - 4d + 1)w^2 - (d^3 - 2d + 1)w + d^3-2d^2+d = 0.
\label{eq:N3H3a}
\end{equation}

The point \(w\) was such that \(\phi^2(w) = d\) and \(\phi(w)\) is non-periodic. The second periodic point of period \(2\) is \(\frac{d-1}{2d-1}\) and its non-periodic preimage is \(\frac{d-1}{d}\). Let \(u\) be a rational preimage of \(\frac{d-1}{d}\), i.e. \(\phi(u)= \frac{d-1}{d}\). This implies

\begin{equation}
(2d - 1)u^2 - (d^3 + d^2 - d)u +d^3 - d^2 = 0.
\label{eq:N3H3b}
\end{equation}

Together with Eq. \eqref{eq:N3H3a}, we get the following affine space curve parametrizing the graph N3H3:

\begin{equation}
C_1 : 
\begin{cases}
(4d^2 - 4d + 1)w^2 - (d^3 - 2d + 1)w + d^3-2d^2+d = 0 \\
(2d - 1)u^2 - (d^3 + d^2 - d)u +d^3 - d^2 = 0.
\end{cases}
\end{equation}

Using the discriminant (with respect to \(u\)) we can bring the second equation to the form

\begin{equation}
v^2 = d^4 + 2d^3 - 9d^2 + 10d - 3
\end{equation}
where 
\begin{equation}
v = \frac{(4d-2)u-(d^3+d^2-d)}{d},
\end{equation}
and we have already seen in Section \ref{sec-R3P5} that Eq. \eqref{eq:N3H3b} can be transformed in a similar way to
\begin{equation}
y^2 = d^4 - 14d^3 + 15d^2 - 6d + 1.
\end{equation}

Thus the curve \(C_1\) is birational to a curve \(C_2\) in \(\mathbb{A}^3_{(d,y,v)}\) defined by

\begin{equation}
C_2:
\begin{cases}
y^2 = d^4 - 14d^3 + 15d^2 - 6d + 1 \\
v^2 = d^4 + 2d^3 - 9d^2 + 10d - 3
\end{cases}.
\end{equation}
The curve \(C_2\) is of genus \(5\), and is a double-cover of the following curve:

\begin{equation}
H_1 : s^2 = (d^4 - 14d^3 + 15d^2 - 6d + 1)(d^4 + 2d^3 - 9d^2 + 10d - 3).
\end{equation}

The curve \(H_1\) has genus \(3\), and its equation can be simplified to the following model:

\begin{equation}
H_2 : y^2 = -3x^8 + 4x^7 - 2x^6 - 16x^5 + 11x^4 + 16x^3 - 2x^2 - 4x - 3.
\end{equation}

This curve has automorphism group \(\mathbb{Z}/2\mathbb{Z}\times\mathbb{Z}/2\mathbb{Z}\). When quotienting the curve by the automorphism \([Z : -Y : -X]\), we get the following curve:

\begin{equation}
H_3 : y^2 = -3x^6 + 4x^5 - 26x^4 + 12x^3 - 55x^2 - 16x + 4.
\end{equation}

This genus \(2\) hyperelliptic curve contains (at least) the following two points:

\begin{equation}
P_1 = [0 : -2 : 1], P_2 = [0 : 2 : 1].
\end{equation}

Using \textsc{Magma}, one can compute that \(H_3\) has Jacobian with Mordell--Weil rank bounded by \(1\), and torsion subgroup isomorphic to \(\mathbb{Z}/2\mathbb{Z}\). The divisor class \([P_1 - P_2]\) has infinite order in the Jacobian, and therefore generates a finite index subgroup of the Mordell--Weil group. We can use the implementation of the Chabauty--Coleman/Mordell--Weil sieving algorithm (cf. Bruin and Stoll \cite{MR2685127}) in \textsc{Magma}, and find that the only rational points on the curve are the two points we had already discovered. 

We pull these two points all the way back \(C_1\) to find the complete set of rational points on the latter curve (not including \(4\) points at infinity):
\begin{align*}
(1/2 , 1 , 1), (1 , 0 , 0), (0 , 0 , 0), (1 , 1 , 0), (0 , 0 , 1).
\end{align*}
None of these points have a \(d\) value corresponding to a quadratic rational function \(\phi_d\) realizing N3H3.

\subsection{Support for Conjecture 1 \label{support-conj1}}
\label{sec:orgheadline26}

\subsubsection{Extra 3-cycle}
\label{sec:orgheadline24}
We compute the 3-rd dynatomic polynomial of \(\phi_a\). 
\begin{align*}
\Phi_{a,3}^*(z) := \Phi_{\phi_a,3}^*(z) = &(a^5 + 5a^4 + 10a^3 + 10a^2 + 5a + 1)z^5 \\
& + (-2a^5 - 7a^4 - 9a^3 - 4a^2 + a + 1)z^4 \\
& + (a^5 - 7a^3 - 13a^2 - 10a - 3)z^3  \\
& + (2a^4 + 5a^3 + 4a^2 + a)z^2  \\
& + (a^3 + 3a^2 + 3a + 1)z
\end{align*}
We divide the dynatomic polynomial by the factor \((a+1)z(z-1)\) to get the irreducible polynomial
\[P_3 := a^3z^3 - a^3z^2 + 3a^2z^3 - 2a^2z + 3az^3 + 2az^2 - 2az - a + z^3 + 2z^2- z - 1\]

The curve \(C_3\) defined by the equation \(P_3 = 0\) contains the four points

\[[0 : 1 : 0], [-1 : 0 : 1], [1 : 0 : 0], [-1 : 1 : 1].\]

Furthermore, \(C_3\) is birational to the genus 2 hyperelliptic curve defined by
\[H_3: y^2 = x^6 - 4x^5 + 6x^4 - 2x^3 + x^2 - 2x + 1\]

Using \textsc{Magma}, we can check that \(H_3\) has Jacobian of Mordell--Weil rank 0. The curve \(H_3\) has good reduction modulo 3, and its reduction has Jacobian isomoprphic to the group \(\mathbb{Z}/19\mathbb{Z}\). This means that Mordell--Weil group (of the Jacobian) of \(H_3\) is either trivial or isomorphic to \(\mathbb{Z}/19\mathbb{Z}\). However, one can check that the element on the Jacobian defined by the difference of the two points at infinity \([1:-1:0] - [1:1:0]\) has order \(19\) and therefore the Jacobian of \(H_3\) is isomorphic to \(\mathbb{Z}/19\mathbb{Z}\).

We get the following commutative diagram where all the maps are injective.

\begin{center}
\begin{tikzcd}
H_3(\mathbb{Q}) \arrow{d} \arrow{r}
&J(\mathbb{Q}) \arrow{d}\\
H_3(\mathbb{F}_3) \arrow{r} & J(\mathbb{F}_3)
\end{tikzcd}
\end{center}

We check that 
\[H_3(\mathbb{F}_3) = \{[1 : -1 : 0], [1 : 1 : 0], [0 : -1 : 1], [0 : 1 : 1], [1 : -1 : 1], [1 : 1 : 1]\}.\]
All of these points are reductions of points from \(H_3(\mathbb{Q})\), and this implies that the \(\mathbb{Q}\)-rational points on \(H_3\) are exactly the six points
\[ [1 : -1 : 0], [1 : 1 : 0], [0 : -1 : 1], [0 : 1 : 1], [1 : -1 : 1], [1 : 1 : 1].\]

We pull these points back to \(C_3\), to find that the only rational points on \(C_3\) are the four points we already discovered. None of which correspond to non-degenerate maps \(\phi_a\).

\subsubsection{Extra 4-cycle \label{sec-4cycle}}
\label{sec:orgheadline25}
We compute the 4th dynatomic polynomial of \(\phi_a\), and divide by a factor of \((a+1)^4\), to get the following polynomial.

\begin{align*}
P_4(a,t) := &(a^{6} + 6 a^{5} + 15 a^{4} + 20 a^{3} + 15 a^{2} + 6 a + 1) z^{12} \\
& + (-2 a^{7} - 19 a^{6} - 64 a^{5} - 107 a^{4} - 98 a^{3} - 49 a^{2} - 12 a - 1) z^{11}  \\
& + (a^{8} + 16 a^{7} + 77 a^{6} + 152 a^{5} + 128 a^{4} + 16 a^{3} - 46 a^{2} - 30 a - 6) z^{10}  \\
& + (-3 a^{8} - 27 a^{7} - 58 a^{6} + 38 a^{5} + 248 a^{4} + 304 a^{3} + 165 a^{2} + 41 a + 4) z^{9}  \\
& + (3 a^{8} + 8 a^{7} - 61 a^{6} - 239 a^{5} - 240 a^{4} - 8 a^{3} + 111 a^{2} + 59 a + 9) z^{8}  \\ 
& + (-a^{8} + 11 a^{7} + 64 a^{6} + 10 a^{5} - 251 a^{4} - 317 a^{3} - 126 a^{2} - 8 a + 2) z^{7}  \\
& + (-6 a^{7} + 12 a^{6} + 128 a^{5} + 133 a^{4} - 94 a^{3} - 153 a^{2} - 48 a - 1) z^{6}  \\
& + (-16 a^{6} - 5 a^{5} + 130 a^{4} + 156 a^{3} - a^{2} - 42 a - 8) z^{5}  \\
& + (-25 a^{5} - 26 a^{4} + 78 a^{3} + 94 a^{2} + 12 a - 7) z^{4}  \\ 
& + (-25 a^{4} - 31 a^{3} + 26 a^{2} + 33 a + 5) z^{3}  \\
& + (-16 a^{3} - 20 a^{2} + 2 a + 5) z^{2}  \\
& + (-6 a^{2} - 7 a - 1) z - a - 1.
\end{align*}

The equation \(P_4 = 0\) describes an affine plane curve of genus 12 which we denote by \(C_4\). We follow the trace map method described in \cite{MR1665198}: Let 
\[tr_4(z)=z+\phi(z)+\phi^2(z)+\phi^3(z).\] 
The image of the curve \(C_4\) under the map \(\Psi: (a,z) \mapsto (a, tr_4(z))\) is birational to the quotient curve \(C_4/\left<\sigma\right>\), where \(\sigma\) is the automorphism of \(C_4\) defined by \((a,z)\mapsto(a,\phi(z))\). When \(\phi\) is a polynomial, the image of \(\Psi\) can be calculated by taking the resultant of \(\Phi_{a,4}^*(z)\) and \(t-tr_4(z)\) with respect to \(z\). However, in our case \(t-tr_4(z)\) is not a polynomial, since \(tr_4(z)\) is a rational function in \(z\). We fix this by clearing out the denominator of \(tr_4(z)\); we denote the numerator of \(tr_4\) by \(A\) and the denominator by \(B\). 

We compute the resultant \(\text{Res}(P_4, Bt-A)\) and find the following irreducible factor.
\begin{multline*}
\tilde{P}_4(a,t) = a^{7} t - 2 \, a^{6} t^{2} + a^{5} t^{3} - a^{7} + 13 \, a^{6} t - 17 \, a^{5} t^{2} + 5 \, a^{4} t^{3} - \\
7 \, a^{6} + 53 \, a^{5} t - 47 \, a^{4} t^{2} + 10 \, a^{3} t^{3} - 6 \, a^{5} + 80 \, a^{4} t - \\
60 \, a^{3} t^{2} + 10 \, a^{2} t^{3} + 43 \, a^{4} + 42 \, a^{3} t - 38 \, a^{2} t^{2} + 5 \, a t^{3} +\\ 
95 \, a^{3} - 10 \, a^{2} t - 11 \, a t^{2} + t^{3} + 89 \, a^{2} - 19 \, a t - t^{2} + 42 \, a - 6 \, t + 9
\end{multline*}

The degree \(8\) curve defined by \(\tilde{P}_4=0\) is of genus 1. It is birational to the elliptic curve with Cremona reference 19a3 (this curve has already been encountered as being birational to the modular curve of N3E1, see the remark in Section \ref{sec-N3E1}), and has the following minimal model
\[y^2 + y = x^3 + x^2 + x.\]

Its Mordell--Weil group is isomorphic to \(\mathbb{Z}/3\mathbb{Z}\) and therefore there are exactly three rational points on this elliptic curve. These are
\[[0 : 0 : 1], [0 : 1 : 0], [0 : -1 : 1].\]

Pulling these points back to the curve \(C_4/\left<\sigma\right>\), we find all rational points on \(C_4/\left<\sigma\right>\) to be
\[[0 : 1 : 0], [1 : 0 : 0], [1 : 1 : 0].\]
All these points are "cusps" (the points added to the dynamical modular curve to make it projective), and therefore we can conclude that there are no quadratic rational functions \(\phi_a\) with a 4-cycle.

\bibliographystyle{plain}
\begin{bibdiv}
\begin{biblist}

\bib{MR1484478}{article}{
      author={Bosma, Wieb},
      author={Cannon, John},
      author={Playoust, Catherine},
       title={The {M}agma algebra system. {I}. {T}he user language},
        date={1997},
        ISSN={0747-7171},
     journal={J. Symbolic Comput.},
      volume={24},
      number={3-4},
       pages={235\ndash 265},
         url={http://dx.doi.org/10.1006/jsco.1996.0125},
        note={Computational algebra and number theory (London, 1993)},
      review={\MR{1484478}},
}

\bib{MR2685127}{article}{
      author={Bruin, Nils},
      author={Stoll, Michael},
       title={The {M}ordell-{W}eil sieve: proving non-existence of rational
  points on curves},
        date={2010},
        ISSN={1461-1570},
     journal={LMS J. Comput. Math.},
      volume={13},
       pages={272\ndash 306},
         url={http://dx.doi.org/10.1112/S1461157009000187},
      review={\MR{2685127}},
}

\bib{MR2282912}{incollection}{
      author={Cremona, John},
       title={The elliptic curve database for conductors to 130000},
        date={2006},
   booktitle={Algorithmic number theory},
      series={Lecture Notes in Comput. Sci.},
      volume={4076},
   publisher={Springer, Berlin},
       pages={11\ndash 29},
         url={http://dx.doi.org/10.1007/11792086_2},
      review={\MR{2282912}},
}

\bib{MR3656200}{article}{
      author={Canci, Jung~Kyu},
      author={Vishkautsan, Solomon},
       title={Quadratic maps with a periodic critical point of period 2},
        date={2017},
        ISSN={1793-0421},
     journal={Int. J. Number Theory},
      volume={13},
      number={6},
       pages={1393\ndash 1417},
         url={http://dx.doi.org/10.1142/S1793042117500786},
      review={\MR{3656200}},
}

\bib{MR1995861}{article}{
      author={Fakhruddin, Najmuddin},
       title={Questions on self maps of algebraic varieties},
        date={2003},
        ISSN={0970-1249},
     journal={J. Ramanujan Math. Soc.},
      volume={18},
      number={2},
       pages={109\ndash 122},
      review={\MR{1995861}},
}

\bib{MR1480542}{article}{
      author={Flynn, E.~V.},
      author={Poonen, Bjorn},
      author={Schaefer, Edward~F.},
       title={Cycles of quadratic polynomials and rational points on a
  genus-{$2$} curve},
        date={1997},
        ISSN={0012-7094},
     journal={Duke Math. J.},
      volume={90},
      number={3},
       pages={435\ndash 463},
         url={http://dx.doi.org/10.1215/S0012-7094-97-09011-6},
      review={\MR{1480542}},
}

\bib{MR3065461}{article}{
      author={Hutz, Benjamin},
      author={Ingram, Patrick},
       title={On {P}oonen's conjecture concerning rational preperiodic points
  of quadratic maps},
        date={2013},
        ISSN={0035-7596},
     journal={Rocky Mountain J. Math.},
      volume={43},
      number={1},
       pages={193\ndash 204},
         url={http://dx.doi.org/10.1216/RMJ-2013-43-1-193},
      review={\MR{3065461}},
}

\bib{MR3240812}{article}{
      author={Lukas, David},
      author={Manes, Michelle},
      author={Yap, Diane},
       title={A census of quadratic post-critically finite rational functions
  defined over {$\Bbb{Q}$}},
        date={2014},
        ISSN={1461-1570},
     journal={LMS J. Comput. Math.},
      volume={17},
      number={suppl. A},
       pages={314\ndash 329},
         url={http://dx.doi.org/10.1112/S1461157014000266},
      review={\MR{3240812}},
}

\bib{MR1665198}{article}{
      author={Morton, Patrick},
       title={Arithmetic properties of periodic points of quadratic maps.
  {II}},
        date={1998},
        ISSN={0065-1036},
     journal={Acta Arith.},
      volume={87},
      number={2},
       pages={89\ndash 102},
      review={\MR{1665198}},
}

\bib{MR1264933}{article}{
      author={Morton, Patrick},
      author={Silverman, Joseph~H.},
       title={Rational periodic points of rational functions},
        date={1994},
        ISSN={1073-7928},
     journal={Internat. Math. Res. Notices},
      number={2},
       pages={97\ndash 110},
         url={http://dx.doi.org/10.1155/S1073792894000127},
      review={\MR{1264933}},
}

\bib{MR1324210}{article}{
      author={Morton, Patrick},
      author={Silverman, Joseph~H.},
       title={Periodic points, multiplicities, and dynamical units},
        date={1995},
        ISSN={0075-4102},
     journal={J. Reine Angew. Math.},
      volume={461},
       pages={81\ndash 122},
         url={http://dx.doi.org/10.1515/crll.1995.461.81},
      review={\MR{1324210}},
}

\bib{MR0034607}{article}{
      author={Northcott, D.~G.},
       title={Periodic points on an algebraic variety},
        date={1950},
        ISSN={0003-486X},
     journal={Ann. of Math. (2)},
      volume={51},
       pages={167\ndash 177},
         url={http://dx.doi.org/10.2307/1969504},
      review={\MR{0034607}},
}

\bib{MR1617987}{article}{
      author={Poonen, Bjorn},
       title={The classification of rational preperiodic points of quadratic
  polynomials over {${\bf Q}$}: a refined conjecture},
        date={1998},
        ISSN={0025-5874},
     journal={Math. Z.},
      volume={228},
      number={1},
       pages={11\ndash 29},
         url={http://dx.doi.org/10.1007/PL00004405},
      review={\MR{1617987}},
}

\bib{MR2316407}{book}{
      author={Silverman, Joseph~H.},
       title={The arithmetic of dynamical systems},
      series={Graduate Texts in Mathematics},
   publisher={Springer, New York},
        date={2007},
      volume={241},
        ISBN={978-0-387-69903-5},
         url={http://dx.doi.org/10.1007/978-0-387-69904-2},
      review={\MR{2316407}},
}

\bib{MR2465796}{article}{
      author={Stoll, Michael},
       title={Rational 6-cycles under iteration of quadratic polynomials},
        date={2008},
        ISSN={1461-1570},
     journal={LMS J. Comput. Math.},
      volume={11},
       pages={367\ndash 380},
         url={http://dx.doi.org/10.1112/S1461157000000644},
      review={\MR{2465796}},
}

\end{biblist}
\end{bibdiv}

\appendix

\title{Appendix. Rational points on a curve of genus 6}
\author{Michael Stoll}


\maketitle

\begin{abstract}
  We determine the set of rational points on the curve of
  genus~6 that parameterizes quadratic maps $\phi \colon \PP^1 \to \PP^1$
  with an orbit of length~3 containing a critical point
  and a point~$P$ such that $\phi^{\circ 3}(P)$ has order~2
  (but $\phi^{\circ 2}(P)$ is not periodic).
  The result is conditional on standard conjectures (including BSD)
  on the $L$-series of the Jacobian of the curve in question.
\end{abstract}


\section{Introduction}

We consider the curve~$C$ that classifies (up to conjugation by an automorphism
of~$\PP^1$) quadratic maps $\phi \colon \PP^1 \to \PP^1$ such that
\begin{enumerate}[1.]
  \item $\phi$ has a cycle of length~3 containing a critical point, and
  \item $\phi$ has a cycle of length~2, together with a marked third
        preimage of one of the two points in the cycle (whose second
        image is not in the cycle).
\end{enumerate}
(See the introduction of the main article for the definitions of a critical point and \(n\)-cycle). These are exactly the type of maps described by the graph N3H2 in Table \(4\) of the main article. The goal is to show that no such (non-degenerate) $\phi$ are defined
over~$\Q$, which is equivalent to showing that all the rational points
on~$C$ correspond to degenerate maps~$\phi$.
To this end, we first derive an equation for~$C$ as a singular affine
plane curve, then we construct its canonical model in~$\PP^5$,
which is a smooth projective curve~$D$ birational to~$C$.
Finally, we determine~$D(\Q)$ (to be able to do this, we have to
assume some standard conjectures regarding the $L$-series of the
Jacobian of~$D$, including the BSD conjecture)
and map the points back to~$C$.

The arguments and computations are to a large extent parallel to those
performed in~\cite{Stoll2008} in a similar situation, so we give here
a condensed description and refer the reader to~\cite{Stoll2008} for
more information.


\section{The curve and its canonical model}

We can fix the critical \(3\)-cycle to be $0 \mapsto \infty \mapsto 1 \mapsto 0$
with $0$ the critical point (see Section 4.1.2 in the main article for details). Then
\[ \phi(z) = \frac{(a+1) z^2 - a z - 1}{(a+1) z^2} \]
with a parameter~$a$. The condition for the existence of a rational
2-cycle is then that $(a+1)^2 + 4$ is a square. The conic given by
this condition can be parameterized, leading to
\[ a = -\frac{4t}{t^2-1} - 1 \]
and
\[ \phi_t(z) = \frac{4 t z^2 - (t^2 + 4 t - 1) z + t^2 - 1}{4 t z^2}; \]
the 2-cycle contains the two points $\frac{t+1}{2}$ and~$\frac{t-1}{2t}$,
which are swapped by the automorphism (on the parameter space) $t \mapsto -1/t$.
$\phi_t$ degenerates for $t = -1, 0, 1, \infty$.

The other preimage of~$\frac{t+1}{2}$ is~$-\frac{t+1}{t-1}$.
We will require it to have a rational second preimage.
The requirement of a rational (first) preimage results in a curve
that is birational to the elliptic curve~$E$ with label~53a1 in the
Cremona database, whose Mordell-Weil group is~$\Z$ (This is the curve parametrizing rational functions admitting the graph R3P5 in Table 1 of the main article; see Section 4.1.6 there).
Requiring a second rational preimage results in a double cover of this
curve, which (from setting $\phi_t^{\circ 2}(z) = -\frac{t+1}{t-1}$)
can be given by the affine equation
\begin{align*}
  16 t^2 (t+1) z^4 &+ (t^2 + 4 t - 1) (t^3 - 13 t^2 - 5 t + 1) z^3
    + (t^5 + 29 t^4 + 34 t^3 - 30 t^2 - 3 t + 1) z^2 \\
    &{}- 4 (t - 1) t (t + 1) (t^2 + 4 t - 1) z
    + 2 (t - 1)^2 t (t + 1)^2 = 0 .
\end{align*}
A quick computation in Magma~\cite{Magma} shows that (the smooth projective
model of) this curve has genus~$g = 6$. Setting
\[ t = \frac{u+1}{u-1} \qquad\text{and}\qquad z = \frac{1}{1-v} \]
results in the simpler equation
\begin{align*}
  F(u,v) &:= u^5 v^2 + 2 u^4 v^3 - u^4 v^2 - u^4 v + u^3 v^4 - 4 u^3 v^2 + u^3 \\
         &\qquad{} + u^2 v^4 - 4 u^2 v^3 + 3 u^2 v - 2 u v^3 + 4 u v^2 - u + v^2 - v = 0
\end{align*}
for our curve~$C$.
In terms of $u$ and~$v$, the involution corresponding to the double
cover $C \to E$ is given by $(u,v) \mapsto (u,u^{-1}-1-u-v)$.

Set
\[ \omega_0 = \frac{du}{\frac{\partial F}{\partial v}(u,v)}
            = -\frac{dv}{\frac{\partial F}{\partial u}(u,v)} .
\]
Then the following differentials are a basis of the space of regular
differentials on~$C$:
\begin{align*}
  \omega_1 &= (u^3 v + u^2 v^2 - 2 u^2 v - u v^2 + v) \omega_0, \\
  \omega_2 &= (u^2 v + u v^2 - u v - v) \omega_0, \\
  \omega_3 &= u v \omega_0, \\
  \omega_4 &= (u^2 - u) \omega_0, \\
  \omega_5 &= (u - 1) \omega_0, \\
  \omega_6 &= \omega_0.
\end{align*}
The projective closure~$D$ of the image of the corresponding canonical map $C \to \PP^5$
(with coordinates $w_1, \ldots, w_6$) is then defined by the following
six quadratic equations:
\begin{align*}
  -w_2 w_5 + w_1 w_6 &= 0, \\
  -w_2 w_4 + w_1 w_5 + w_2 w_5 &= 0, \\
  -w_5^2 + w_4 w_6 - w_5 w_6 &= 0, \\
  -w_3^2 - w_3 w_4 + w_1 w_6 + w_2 w_6 + w_3 w_6 &= 0, \\
  w_1^2 + 2 w_1 w_2 - w_1 w_4 - w_2 w_4 + w_4^2 - w_1 w_5 + w_4 w_5 &= 0, \\
  w_1 w_2 + 2 w_2^2 - w_2 w_4 - w_2 w_5 + w_4 w_5 + w_5^2 - w_1 w_6
    - w_2 w_6 + w_4 w_6 + w_5 w_6 &= 0.
\end{align*}
The birational map $D \to C$ is given by
\[ u = \frac{w_4+w_5+w_6}{w_5+w_6}, \qquad v = \frac{w_3}{w_5+w_6} . \]

Using the \verb+PointSearch+ command of Magma, we find the following nine
rational points on~$D$:
\begin{align*}
  P_1 &= (0 : 0 : 0 : 2 : -2 : 1), \\
  P_2 &= (0 : 0 : 1 : 0 : 0 : 1), \\
  P_3 &= (0 : 0 : -1 : 2 : -2 : 1), \\
  P_4 &= (0 : 0 : 0 : 0 : -1 : 1), \\
  P_5 &= (1 : -1 : 0 : 0 : -1 : 1), \\
  P_6 &= (-2 : 1 : 0 : 0 : 0 : 0), \\
  P_7 &= (0 : 0 : 0 : 0 : 0 : 1), \\
  P_8 &= (0 : 0 : 1 : 0 : -1 : 1), \\
  P_9 &= (1 : -1 : 1 : 0 : -1 : 1). \\
\end{align*}
We suspect that these are all the rational points. The proof of this claim
will take up the remainder of this note.

We note that the images of these points on~$C$ are $(-1,0)$, $(1,1)$, $(-1,1)$,
$(0,0)$, $(0,1)$, a point at infinity, $(1,0)$, and two times a point at
infinity. Since $u = -1,0,1,\infty$ gives a degenerate~$\phi$ and $v = \infty$
gives $z = 0$, which also makes $\phi$ degenerate, this will imply that there
are no non-degenerate~$\phi$ defined over~$\Q$ with the required properties.


\section{The Jacobian}

Working with the equations for~$D \subset \PP^5$, we find that $D$ has
bad reduction (at most) at the primes $3$, $53$ and~$99\,563$.
In each of these cases, the reduction is semistable, with a single component
in the special fiber that has one split node for $p = 3$, two non-split nodes
each defined over~$\F_{53}$ for $p = 53$, and one split node  for $p = 99\,563$.
(A node is \emph{split}, if the two tangent directions are defined over
the field of definition of the node.) So $D$ is semistable, and its Jacobian~$J$
has conductor $N_J = 3 \cdot 53^2 \cdot 99\,563$. Since $D$ maps non-trivially to
the elliptic curve~$E$ of conductor~$53$, $J$ is isogenous to a product
$E \times A$, where $A$ is an abelian variety over~$\Q$ of dimension~$5$
and with conductor $N_A = 3 \cdot 53 \cdot 99\,563 = 15\,830\,517$.

With a computation analogous to that leading to Lemma~4 in~\cite{Stoll2008},
we find that the torsion subgroup of~$J(\Q)$ has exponent dividing~$2$
(we did not try to find the torsion subgroup exactly) and that the differences
of the nine known rational points on~$D$ generate a subgroup isomorphic
to~$\Z^2$ of~$J(\Q)$. So if we can show that $J(\Q)$ has rank~$2$, then
we know generators of a subgroup of finite index (and the rank is strictly
less than the genus), so we can apply the Chabauty-Coleman method.

Since there is little hope to perform a successful Selmer group computation
on~$J$, which would give an upper bound for the rank, we follow the approach
already used in~\cite{Stoll2008} and assume that the $L$-series of~$J$
has an analytic continuation to all of~$\C$, that the function
\[ \Lambda(J, s) = N_J^{s/2} (2\pi)^{-6s} \Gamma(s)^6 L(J, s) \]
satisfies the functional equation $\Lambda(J, 2-s) = w_J \Lambda(J, s)$
with the global root number $w_J = \pm 1$, and that the Birch and Swinnerton-Dyer
conjecture holds for~$J$. For the computations, it is better to work with
the $L$-series of~$A$, since its conductor $N_A$ is smaller than~$N_J$.
According to Magma's implementation of $L$-series, it requires the
coefficients of~$L(A,s)$ up to $n \approx 105\,000$ for a precision of
20~decimal digits. We find the coefficients of the Euler factors
of the $L$-series of~$J$ up to the required bound by counting the
$\F_q$-points on~$D$ for all prime powers~$q$ below the bound.
This is most efficiently done on the affine model~$C$, by keeping track
of the points at infinity modulo~$p$ and of what happens at the
singular points (and some care has to be taken at the bad primes).
In this way and after dividing by the Euler factors of~$E$, we obtain
within a few hours the relevant coefficients for the computation.
We then check (using Magma's \verb+CheckFunctionalEquation+) that the
data we have computed is compatible with the expected functional equation
for $L(A,s)$ with root number $w_A = -1$ (but not with $w_A = 1$).
So we can safely assume that $L(J,s)$ satisfies the functional equation
with root number $w_J = w_A w_E = (-1)(-1) = 1$. This is also in agreement
with the expectation that the global root number should be equal to the
product of the local root numbers, which in the semistable case is
$(-1)^{g+s}$, where $s$ is the total number of Frobenius orbits of
split nodes. Here $g = 6$ and $s = 2$, so the root number should indeed
by~$1$. We then evaluate the derivative of~$L(A,s)$ at~$s = 1$ numerically
and find a clearly nonzero value of $\approx 0.026803015530623712948$.
So according to the BSD conjecture, the rank of~$A(\Q)$ should be~$1$
and the rank of~$J(\Q)$ should therefore be~$2$.


\section{Applying the Chabauty-Coleman method}

By the results of the previous section (assuming BSD for~$J$), we now know
that the differences of the known rational points provide us with generators
of a finite-index subgroup of~$J(\Q)$. To apply Chabauty-Coleman, we have
to fix a prime~$p$. We choose $p = 5$, because $5$ is a prime of good reduction
and the set of known rational points maps bijectively to the points in~$D(\F_5)$
under reduction. This latter observation implies that it will be enough to show that
each residue class mod~$5$ in~$D(\Q_5)$ contains at most one rational point.
By the results of~\cite{Stoll2006}, this follows when we can show that
for each point in~$D(\F_5)$, there is a differential in the annihilator
of~$J(\Q)$ whose reduction mod~$5$ does not vanish there.

We first have to find a basis of this annihilator in the space of regular
differentials on~$D$ over~$\Q_5$. We first fix one of our known rational
points~$P$ as a base-point; we have to choose it in such a way that its reduction~$\bar{P}$
mod~$5$ is non-special in the sense that the Riemann-Roch space of~$6\bar{P}$
is one-dimensional. We check that $P_4$ satisfies this requirement.
We then use the reduction map $J(\Q) \to J(\F_5)$ to find two independent
points in its kernel, which we represent by divisors of the form
$\calD_j - 6 P_4$, where $\calD_1$ and~$\calD_2$ are effective of degree~$6$.
Since $\bar{P}_4$ is non-special, it follows that $\calD_j$ reduces mod~$5$
to~$6 \bar{P}_4$ for $j = 1, 2$. We choose $t = 1 + w_5/w_6$ as a uniformizer
at~$P_4$ (that reduces to a uniformizer at~$\bar{P}_4$), express the
differentials $\omega_1, \ldots, \omega_6$ as power series in~$t$ times~$dt$,
integrate formally, and use the method explained in~\cite{Stoll2008} to
compute the relevant integrals modulo~$5^3$ (modulo~$5^2$ would actually be
sufficient). They are all multiples of~$5$, so we divide them by~$5$
and reduce the $2 \times 6$-matrix obtained modulo~$5$. Its kernel
gives the reduction of annihilator, which in our case is generated by
the reductions of $\omega_1 + \omega_4$, $\omega_2 - \omega_4$, $\omega_5$
and~$\omega_6$. Since the locus of vanishing of a linear combination
of the~$\omega_j$ on~$D$ is given by the corresponding hyperplane section,
all we have to do is to check that the line defined in~$\PP^5_{\F_5}$
by $w_1 + w_4 = w_2 - w_4 = w_5 = w_6 = 0$ does not meet $D_{\F_5}$,
which is easily verified. This concludes the proof.


\renewcommand{\refname}{References to the Appendix}
\begin{bibsection}
\begin{biblist}

\bib{Magma}{article}{
   author={Bosma, Wieb},
   author={Cannon, John},
   author={Playoust, Catherine},
   title={The Magma algebra system. I. The user language},
   journal={J. Symbolic Comput.},
   volume={24},
   date={1997},
   number={3-4},
   pages={235--265},
   url={See also the Magma home page at http://magma.maths.usyd.edu.au/magma/},
}

\bib{Stoll2006}{article}{
   author={Stoll, Michael},
   title={Independence of rational points on twists of a given curve},
   journal={Compos. Math.},
   volume={142},
   date={2006},
   number={5},
   pages={1201--1214},
   issn={0010-437X},
   review={\MR{2264661}},
   doi={10.1112/S0010437X06002168},
}

\bib{Stoll2008}{article}{
   author={Stoll, Michael},
   title={Rational 6-cycles under iteration of quadratic polynomials},
   journal={LMS J. Comput. Math.},
   volume={11},
   date={2008},
   pages={367--380},
   issn={1461-1570},
   review={\MR{2465796}},
   doi={10.1112/S1461157000000644},
}

\end{biblist}
\end{bibsection}

\end{document}